\documentclass{amsart}

\usepackage{amsmath, amsfonts, amssymb} 
\usepackage{enumerate}
\usepackage{enumitem}
\usepackage{hyperref}
\usepackage{graphicx}
\usepackage[all]{xy}
\usepackage{wrapfig}
\usepackage{fancyvrb}
\usepackage[mathscr]{euscript}
\usepackage[T1]{fontenc}
\usepackage{listings}
\usepackage{tikz}
\usepackage{amsthm}
\usepackage{dsfont}
\usepackage{tikz-cd}
\usepackage{adjustbox}
\usepackage{centernot}
\usepackage{mathtools}
\usepackage{cite}
\usepackage{bbm}
\usepackage{stmaryrd}

\usepackage[margin=3cm]{geometry}

\let\oldtocsection=\tocsection

\let\oldtocsubsection=\tocsubsection

\let\oldtocsubsubsection=\tocsubsubsection

\renewcommand{\tocsection}[2]{\hspace{0em}\oldtocsection{#1}{#2}}
\renewcommand{\tocsubsection}[2]{\hspace{1em}\oldtocsubsection{#1}{#2}}
\renewcommand{\tocsubsubsection}[2]{\hspace{2em}\oldtocsubsubsection{#1}{#2}}

\title{Relative Serre Duality for Coxeter Groups}
\author{Colton Sandvik}
\date{}

\newcommand{\C}{\mathbb{C}}

\newcommand{\Z}{\mathbb{Z}}

\renewcommand{\k}{\mathbbm{k}}

\newcommand{\scrF}{\mathcal{F}}
\newcommand{\scrG}{\mathcal{G}}
\newcommand{\scrH}{\mathcal{H}}
\newcommand{\scrC}{\mathcal{C}}

\newcommand{\scrE}{\mathcal{E}}

\newcommand{\scrA}{\mathcal{A}}

\newcommand{\uw}{\underline{w}}

\DeclareMathOperator{\Hom}{Hom}

\newcommand{\grbim}[1]{#1\textnormal{-bim}^{\Z}}

\DeclareMathOperator{\Free}{Free}
\newcommand{\BE}{D^m}

\newcommand{\fr}[1]{\mathfrak{#1}}
\DeclareMathOperator{\op}{op}
\DeclareMathOperator{\fg}{fg}
\DeclareMathOperator{\id}{id}
\DeclareMathOperator{\idem}{idem}

\renewcommand{\emptyset}{\varnothing}

\newcommand{\wistar}{\stackrel{I}{\star}}
\newcommand{\DD}{\mathbb{D}}
\DeclareMathOperator{\supp}{supp}
\DeclareMathOperator{\Sym}{Sym}

\newtheorem{corollary}{Corollary}[section]
\newtheorem{lemma}{Lemma}[section]
\newtheorem{proposition}{Proposition}[section]
\newtheorem{conjecture}{Conjecture}[section]
\newtheorem{theorem}{Theorem}[section]

\theoremstyle{definition}

\newtheorem{remark}{Remark}[section]

\newenvironment{midsecproof}[1]{\vspace{\topsep} \noindent \textit{Proof of #1.}}{\hfill$\square$}

\tikzstyle{row}=[-, draw=black, thick]
\tikzstyle{dashed_row}=[-, draw=black, dashed]

\DeclareMathOperator{\FT}{FT}

\begin{document}

        \begin{abstract}
                It was conjectured by Gorsky, Hogancamp, Mellit, and Nakagane that the left and right adjoints of the parabolic induction functor between homotopy categories of Soergel bimodules associated to a finite Coxeter group are related by the relative full twist.
                Several cases of this conjecture are known including for symmetric groups, crystallographic Coxeter groups, and dihedral groups.
                We prove this conjecture in complete generality using the theory of Abe--Bott--Samelson bimodules and the Achar--Riche--Vay mixed derived category. 
        \end{abstract}

        \maketitle

        \section{Introduction}

        \subsection{Bott--Samelson Bimodules}

        Let $(W,S)$ be a Coxeter system and $\fr{h}$ be a realization of $W$ over a noetherian domain $\k$ of finite global dimension.
        Associated to the pair $(\fr{h}, W)$ is the category of Abe--Bott--Samelson bimodules $\scrA^{\oplus} (\fr{h}, W)$ introduced by \cite{Abe19}.
        This is a (non-full) additive subcategory of graded $R = \Sym (\fr{h}^*)$-bimodules which recovers the usual Bott--Samelson bimodules when $\fr{h}$ is reflection faithful.
        Abe--Bott--Samelson bimodules can be used to define the \emph{mixed derived category},
        \[\BE (\fr{h}, W) \coloneq K^b \scrA^{\oplus} (\fr{h}, W).\]
        This category admits a monoidal structure $\star$ inherited from the tensor product of graded $R$-bimodules.
        Based on similar constructions using parity sheaves, Achar--Riche--Vay \cite{ARV} gave an in-depth study of the mixed derived category.
        In particular, they showed that many constructions and features encountered in the theory of mixed $\ell$-adic sheaves have purely combinatorial analogues for the mixed derived category.
        
        The mixed derived category was first studied in knot theory. For example, Rouquier gave an action of the braid group on the mixed derived category \cite{Rou}.
        When $W= S_n$, this action is the foundation for the construction of link invariants such as  Khovanov--Rozansky homology  \cite{Kho, KR}.
        Since their introduction, the mixed derived category has also seen considerable study in modular representation theory.
        For example, it is a crucial component in proving the tilting character formulas for reductive groups developed by Achar--Makisumi--Riche--Williamson \cite{AMRW}.

        \subsection{Relative Serre Duality}
        Let $I \subseteq S$ be a subset of simple reflections, and write $W_I$ for the parabolic subgroup generated by $I$. The realization $\fr{h}$ for $W$ can also be viewed as a realization for $W_I$. 
        There is a monoidal functor
        \[\iota : \BE (\fr{h}, W_I) \to \BE (\fr{h}, W)\]
        called \emph{parabolic induction} which is induced by the inclusion $W_I \subseteq W$. 

        It is shown in \cite[Corollary A.8]{GHMN}, that parabolic induction admits a left adjoint $\iota^L$ and a right adjoint $\iota^R$.
        Another perspective on these adjoints comes from the recollement formalism of \cite{ARV} which we heavily use in this paper.
        In \cite{GHMN}, it was conjectured that the left and right adjoints are related under $\star$ by the \emph{relative full twist}-- a certain object $\FT_{W, I} \in \BE (\fr{h}, W)$ defined using Rouquier's braid group action \cite{Rou}.
        More precisely, they conjectured the following.

        \begin{conjecture}[{\!\!\cite[Conjecture 1.8]{GHMN}}]\label{conj:rel_serre_duality_intro}
                Assume that $W$ is finite.
                \begin{enumerate}
                        \item There are natural isomorphisms of functors 
                \[
                        \iota^L (\FT_{W, I} \star -) \cong  \iota^R \cong \iota^L (- \star \FT_{W, I}).
                \]
                        \item $\FT_{W,I}$ naturally commutes with objects of $\BE (\fr{h}, W_I)$.
                \end{enumerate}
        \end{conjecture}

        We will briefly discuss some history on known cases of Conjecture \ref{conj:rel_serre_duality_intro}.
        \begin{enumerate}
                \item Let $W = S_n$ and $W_I = S_r \times (S_1)^{n-r}$ for some $r \leq n$. Consider the realization $\fr{h} = \k^{\oplus n}$ where $W$ acts on $\fr{h}$ by permuting the coordinates. In this case, Conjecture \ref{conj:rel_serre_duality_intro} was proved in \cite{GHMN}.
                \item Q. Ho and P. Li \cite{HLnew} made substantial progress on Conjecture \ref{conj:rel_serre_duality_intro} using their mixed sheaf formalism introduced in \cite{HLgeom}. 
                In particular, they proved in \cite{HLnew} that Conjecture \ref{conj:rel_serre_duality_intro} (1) holds when $W$ is the Weyl group of a reductive algebraic group and $\fr{h}$ is its associated Cartan realization over $\C$.
                \item When $W$ is a dihedral group, Conjecture \ref{conj:rel_serre_duality_intro} (1) was proved by C. Li in \cite{Li} under some minor non-degeneracy conditions.
                \item Conjecture \ref{conj:rel_serre_duality_intro} (2) is already known provided $W$ does not have a parabolic subgroup of type $H_3$.
                More precisely, Elias and Hogancamp \cite{EH} proved the much stronger statement that the full twist $\FT_W$ naturally commutes with objects of $\BE (\fr{h}, W)$. 
                The restriction on $W$ is a facet of working with the Elias--Williamson diagrammatic category where the 3-color relation is not known in type $H_3$.
        \end{enumerate}
        In this paper, we prove Conjecture \ref{conj:rel_serre_duality_intro} in its entirety without any assumptions on $W$ or $\fr{h}$ other than the minor assumptions on $\fr{h}$ needed to have a well-behaved theory of Abe--Bott--Samelson bimodules (cf., \cite{Abe21}).
        We refer to Theorem \ref{thm:rel_serre_duality} and Corollary \ref{cor:rel_full_twist_centrality} for precise statements. 

        Before continuing, it is worth noting that the author expects that Elias and Hogancamp's result on the centrality of the full twist \cite{EH} should also hold in the setting of this paper.
        It seems likely that this can be proved using similar methods to \emph{loc. cit.} using Abe--Bott--Samelson bimodules instead of the diagrammatic category.

        \begin{conjecture}\label{conj:ft_commutes}
                $\FT_W$ naturally commutes with objects of $\BE (\fr{h}, W)$ under $\star$.
        \end{conjecture}

        The overarching strategy for proving Conjecture \ref{conj:rel_serre_duality_intro} is heavily influenced by \cite{HLnew}.
        The mixed derived category formalism of \cite{ARV} will allow many of their arguments to be adapted even when $W$ is not crystallographic.
        We briefly mention where our proofs differ.
        
        \begin{enumerate}
                \item The argument for $\BE (\fr{h}, W)$ being a rigid monoidal category in \cite{HLnew} heavily depends on geometry which cannot be used for general Coxeter groups.
                Instead, we give an elementary proof of rigidity using the theory of Abe--Bott--Samelson bimodules. 
                As part of our proof, we also give some applications to the structure of the ``sheaf-functors'' in mixed derived categories. 
                \item The present paper is written solely using ordinary categories rather than $\infty$-categories. 
                This decision is largely necessitated by \cite{ARV} working with triangulated categories.
                The author expects that all the constructions in \emph{loc. cit.} can be adapted to use $\infty$-categories, but this is beyond the scope of this paper.
                The only place where $\infty$-categorical machinery is crucially used in \cite{HLnew} is to reduce proving their main result to the case of $I = \emptyset$.
                Instead, our argument is more closely based on an earlier pre-print \cite{HLold} which did not make such a reduction.
                Unfortunately, the argument in the original pre-print has an error which has necessitated new proofs in some places.
        \end{enumerate}

        \subsection{Acknowledgements}
        The author would like to thank Simon Riche for suggesting this project, numerous valuable comments, and carefully reading an earlier draft. The author also thanks Pramod Achar for helpful discussions.
        The author was partially supported by NSF Grant No. DMS-2231492.

        \section{Hecke Categories}

        For this section, we will allow $(W, S)$ to be an arbitrary (possibly infinite) Coxeter system.
        Let $(\fr{h}, \{\alpha_s\}_{s \in S}, \{\alpha_s^{\vee}\}_{s \in S})$ be a realization of $W$ over $\k$ satisfying \cite[Assumption 3.5]{Abe21}.
        Consider the graded algebra $R = \Sym (\fr{h}^*)$ where $\fr{h}^*$ is placed in degree $2$.
        We will also consider the localization $Q$ of the ring $R$ with respect to the multiplicative subset generated by $\{w (\alpha_s) : s\in S, w \in W\}$.
        This ring is $\Z$-graded where $\frac{1}{w (\alpha_s)}$ is in degree $-2$.

        Let $(X, \preceq)$ be a poset. We will say that a subset $Y \subseteq X$ is \emph{closed} if for all $x,x' \in X$ with $x' \in Y$ and $x \preceq x'$ we have $x \in Y$.
        These closed sets define a topology on $X$. In particular, we can make sense of open and locally closed subsets of $X$.
        We will frequently apply this terminology to the case of $W$ endowed with its Bruhat order. In this setting, if $Y \subseteq W$ is closed (resp. open), then its inverse $Y^{-1} \coloneq \{ y^{-1} : y \in Y \}$ is also closed (resp. open).  

        \subsection{Abe--Bott--Samelson Bimodules}

        We will review Abe's theory of Bott--Samelson bimodules introduced in \cite{Abe19}.

        Let $\scrC (\fr{h}, W)$ denote the category whose objects are triples
        \[(M, (M_Q^w)_{w \in W}, \xi_M)\]
        where $M$ is a graded $R$-bimodule, each $M_Q^w$ is a graded $(R,Q)$-bimodule such that $m \cdot f = w(f) \cdot m$ for any $m \in M_Q^w$ and $f \in R$, this bimodule being 0 for all but finitely many $w\in W$, and $\xi_M : M \otimes_R Q \stackrel{\sim}{\to} \oplus_{w \in W} M_Q^w$ is an isomorphism of graded $(R,Q)$-bimodules.
        A morphism in $\scrC( \fr{h}, W)$ from $(M, (M_Q^w)_{w \in W}, \xi_M)$ to $(N, (N_Q^w)_{w \in W}, \xi_N)$ consists of a graded $R$-bimodule map $\varphi : M \to N$ such that 
        \[(\xi_N \circ (\varphi \otimes_R Q) \circ \xi_M^{-1}) (M_Q^w) \subset N_Q^w\]
        for all $w \in W$.
        This category has a natural monoidal structure induced by $\otimes_R$ with unit $R$. It also admits a shift-of-grading functor $(1)$ defined by $(M(1))^n = M^{n+1}$ and $(M_Q^w (1))^n = (M_Q^w)^{n+1}$.
        For simplicity, we often write $M$ instead of the triple $(M, (M_Q^w)_{w \in W}, \xi_M)$.

        Given $M, N \in \scrC (\fr{h}, W)$, we will write
        \begin{equation}\label{eq:graded_Hom_space}
                \Hom_{\scrC (\fr{h}, W)}^{\bullet} (M, N) \coloneq \bigoplus_{n \in \Z} \Hom_{\scrC (\fr{h}, W)} (M,N (n))
        \end{equation}
        which we may view as a graded $R$-bimodule.   
        Note that in \cite{Abe19}, the Hom-spaces in $\scrC (\fr{h}, W)$ are defined by (\ref{eq:graded_Hom_space}). Some discussion on these two points-of-view can be found in \cite[\S2]{ARV}.

        Let $s \in S$, and write $R^s$ for the subalgebra of $s$-invariant elements in $R$. 
        By Demazure surjectivity, we can choose some $\delta_s \in \fr{h}^*$ such that $\langle \delta_s, \alpha_s^{\vee} \rangle = 1$.
        The graded $R$-bimodule $B_s \coloneq R \otimes_{R^s} R (1)$ can be upgraded to an object in $\scrC (\fr{h}, W)$. In particular, if $w \notin \{e,s\}$, we set $(B_s)_Q^w = 0$, and otherwise, 
        \[(B_s)_Q^e = Q (\delta_s \otimes 1 - 1 \otimes s(\delta_s)) \qquad\text{and}\qquad (B_s)_Q^s = Q (\delta_s \otimes 1 - 1 \otimes \delta_s).\]
        
        We define a category $\scrA^{\oplus} (\fr{h}, W)$ as the smallest full subcategory of $\scrC (\fr{h}, W)$ which contains the monoidal unit $R$, the objects $(B_s)_{s \in S}$, and is stable under the monoidal product $\otimes_R$, direct sums $\oplus$, and the shift functor $(1)$.
        Given a sequence $\uw = (s_1,\ldots, s_k)$ of simple reflections, we can define a graded $R$-bimodule
        \[B_{\uw} \coloneq B_{s_1} \otimes_R B_{s_2} \otimes_R \ldots \otimes_R B_{s_k},\]
        which is viewed as an object of $\scrA^{\oplus} (\fr{h}, W)$.

        Consider the contravariant functor
        \[\DD : \scrA^{\oplus} (\fr{h}, W)^{\op} \to \scrA^{\oplus} (\fr{h}, W)\]
        defined on objects $M \in \scrA^{\oplus} (\fr{h}, W)$ by
        \[\DD (M) = \Hom_{\textnormal{mod}^{\Z}\textnormal{-}R}^{\bullet} (M, R) \qquad\text{and}\qquad \DD(M)_Q^w = \Hom_{\textnormal{mod}^{\Z}\textnormal{-}Q}^{\bullet} (M_Q^w, Q).\]
        We call $\DD$ the \emph{Verdier dual functor}. Its properties are studied in \cite[\S 2.6]{Abe19}. 
        For a sequence $\uw = (s_1, \ldots, s_k)$, there is an isomorphism $\DD (B_{\uw} (n)) \cong B_{\uw} (-n)$. Moreover, $\DD$ is an involution of $\scrA^{\oplus} (\fr{h}, W)$ in the sense that $\DD \circ \DD \cong \id$.

        \subsection{Mixed Derived Categories}

        We will recall the ``mixed derived category'' formalism from \cite{ARV}. Our approach differs from \emph{loc. cit.} in that we use Abe--Bott--Samelson bimodules instead of Elias--Williamson diagrammatics to allow for type $H_3$ parabolic subgroups of $W$.
        This does not have any measurable effect on the theory. In particular, the key feature used in the Achar--Riche--Vay formalism is an object-adapted cellular structure on $\scrA^{\oplus} (\fr{h}, W)$.
        One can prove that the double leaves basis constructed in \cite{Abe19} endows $\scrA^{\oplus} (\fr{h}, W)$ with such a structure. 
        For more details, see \cite[\S4.7]{Sandvik} and \cite[Appendix B]{Sandvik}.

        Define the \emph{mixed derived category}
        \[\BE (\fr{h}, W) \coloneq K^b \scrA^{\oplus} (\fr{h}, W).\]
        The mixed derived category has a monoidal structure inherited from $\scrA^{\oplus} (\fr{h}, W)$. We denote the monoidal product by $\star$.

        \begin{remark}
                When $\k$ is a complete local ring, there is a natural equivalence of categories
                \[\BE (\fr{h}, W) \cong K^b \scrA_{\idem}^{\oplus} (\fr{h}, W),\]
                where $\scrA_{\idem}^{\oplus} (\fr{h}, W)$ denotes the category of Abe--Soergel bimodules, i.e., the category obtained from taking the idempotent completion of $\scrA^{\oplus} (\fr{h}, W)$.
                If $\fr{h}$ is additionally reflection faithful, then this recovers the bounded homotopy category of Soergel bimodules appearing in the original conjecture of \cite{GHMN}. 
        \end{remark}

        Let $X \subseteq W$ be a closed subset. We define a full subcategory
        \[\scrA^{\oplus}_{ X} (\fr{h}, W) \subseteq \scrA^{\oplus} (\fr{h}, W)\]
        generated under direct sums by objects of the form $B_{\uw} (n)$ for $n \in \Z$ and $\uw$ a reduced expression for an element in $X$.
        More generally, if $X \subseteq W$ is locally closed, we can write $X = X_0 \setminus X_1$ where $X_1 \subseteq X_0 \subseteq W$ are closed.
        We set
        \[\scrA^{\oplus}_{ X_0, X_1} (\fr{h}, W) \coloneq \scrA^{\oplus}_{ X_0} (\fr{h}, W) /\!\!/ \scrA^{\oplus}_{ X_1} (\fr{h}, W),\]
        where the right-hand side is the ``naive quotient'' defined as follows. For $M, N \in \scrA^{\oplus}_{ X_0} (\fr{h}, W)$, write $\mathfrak{F}_{X_1} (M,N) \subset \Hom_{\scrA^{\oplus} (\fr{h},W)}^{\bullet} (M,N)$ for the submodule of morphisms which factor through $\scrA^{\oplus}_{ X_1} (\fr{h}, W)$.
        The objects in $\scrA^{\oplus}_{ X_0, X_1} (\fr{h}, W)$ are the same as $\scrA^{\oplus}_{ X_0} (\fr{h}, W)$. The morphism spaces are defined by
        \[ \Hom_{\scrA_{X_0, X_1}^{\oplus} (\fr{h},W)} (M,N) \coloneq \left(\Hom_{\scrA^{\oplus} (\fr{h},W)}^{\bullet} (M,N) / \mathfrak{F}_{X_1} (M,N) \right)^0. \]
        By \cite[Lemma 4.3]{ARV}, $\scrA^{\oplus}_{ X_0, X_1} (\fr{h}, W)$ only depends on $X_0 \setminus X_1$ up to a canonical equivalence of categories. 
        As a result, we will write $\scrA^{\oplus}_{ X} (\fr{h}, W) = \scrA^{\oplus}_{ X_0, X_1} (\fr{h}, W)$.
        
        The category $\scrA_X^{\oplus} (\fr{h}, W)$ retains an internal shift-of-grading functor
        \[(1) : \scrA_X^{\oplus} (\fr{h}, W) \to \scrA_X^{\oplus} (\fr{h}, W).\]
        Likewise, since $\DD (B_{\uw}) = B_{\uw}$ for any expression $\uw$, the Verdier dual functor induces an involution 
        \[\DD : \scrA^{\oplus}_{ X} (\fr{h}, W)^{\op} \to  \scrA^{\oplus}_{ X} (\fr{h}, W).\] 

        \subsection{Pushforward and Pullback Functors}

        For each locally closed subset $X \subseteq W$, one can associate a triangulated category
        \[\BE_X (\fr{h}, W) \coloneq K^b \scrA^{\oplus}_{ X} (\fr{h}, W).\]
        Informally, we think of $\BE_X (\fr{h}, W)$ as the category of objects ``supported on $X$''. The Verdier dual functor for $\scrA^{\oplus}_{ X} (\fr{h}, W)$ induces a contravariant autoequivalence of $\BE_X (\fr{h}, W)$ which we denote similarly.
        When $X = W$, by \cite[Remark 6.3 (1)]{ARV}, there is a canonical equivalence of categories 
        \begin{equation}\label{eq:BE_W}
                 \BE_W (\fr{h}, W) \cong \BE (\fr{h}, W).
        \end{equation}
        When $X = \{w\}$, by \cite[Lemma 4.4]{ARV}, there is a canonical equivalence of categories 
        \begin{equation}\label{eq:single_stratum}
                \BE_{\{w\}} (\fr{h}, W) \cong K^b \Free^{\fg, \Z} (R),
        \end{equation}
        where $\Free^{\fg, \Z} (R)$ denotes the additive category of graded free finitely generated graded left $R$-modules.
        We denote by $b_w \in \BE_{\{w\}} (\fr{h}, W)$ the object corresponding to $R$ in $K^b \Free^{\fg, \Z} (R)$.
        
        Given locally closed subsets $Y \subseteq W$ and $X \subseteq Y$ with $X$ finite, there are ``pushforward'' functors
        \[(i_X^Y)_! : \BE_X (\fr{h}, W) \to \BE_Y (\fr{h}, W) \qquad\text{and}\qquad (i_X^Y)_* : \BE_X (\fr{h}, W) \to \BE_Y (\fr{h}, W).\]
        When $Y$ is finite or $X$ is closed, there are also ``pullback'' functors
        \[(i_X^Y)^! : \BE_Y (\fr{h}, W) \to \BE_X (\fr{h}, W) \qquad\text{and}\qquad (i_X^Y)^* : \BE_Y (\fr{h}, W) \to \BE_X (\fr{h}, W). \]
        Here $(i_X^Y)^!$ is the right adjoint of $(i_X^Y)_!$ and $(i_X^Y)^*$ is the left adjoint of $(i_X^Y)_*$.
        These functors are constructed in \cite[\S5.4]{ARV}. They enjoy many properties reminiscent of pullback and pushforward functors of constructible sheaves along locally closed inclusions.
        We list some of these properties below.
        \begin{enumerate}
                \item When $X$ is closed, there is a recollement diagram corresponding to the pair $(X, Y \setminus X)$. See \cite[Proposition 5.6]{ARV}.
                \item The $!$-functors and $*$-functors are exchanged under Verdier duality (\!\cite[(5.15)]{ARV}), i.e., there are natural isomorphisms $\DD \circ (i_X^Y)_! = (i_X^Y)_* \circ \DD$ and $\DD \circ (i_X^Y)^! = (i_X^Y)^* \circ \DD$.
                \item They are compatible with composition (\!\cite[Lemma 5.12]{ARV}). In other words, if $X \subseteq Y \subseteq Z \subseteq W$ are locally closed subsets with $Z$ finite, then there are canonical isomorphisms $(i_X^Y)^? \circ (i_Y^Z)^? \cong (i_X^Z)^?$ and $(i_Y^Z)_? \circ (i_X^Y)_? \cong (i_X^Z)_?$ where $? \in \{*,!\}$.
                \item If $X$ is closed, then $(i_X^Y)_! = (i_X^Y)_*$. Likewise, if $X$ is open, then $(i_X^Y)^* = (i_X^Y)^!$.
                \item Assume that $W$ is finite.
                For an object $\scrF \in \BE (\fr{h}, W)$, we define its \emph{support} as the closed set 
                \[\supp (\scrF) \coloneq \overline{\{w \in W \mid (i_{\{w\}})^* \scrF \neq 0 \} } \subseteq W.\]
                If $Z = \supp (\scrF)$, then there is a natural isomorphism $\scrF \cong (i_Z)_! (i_Z)^* \scrF$.
        \end{enumerate}
        When $Y = W$, we omit the superscript $Y$ from the notation for these functors.

        We will not give a construction of the pullback and pushforward in complete generality. However, we will review the construction of $(i_X^Y)_!$ when $X$ is closed and $(i_X^Y)^*$ when $X$ is open.
        In these cases, no finiteness assumptions are needed.
        Let $Y = Y_0 \setminus Y_1$ for some closed subsets $Y_1 \subset Y_0 \subset W$.
        \begin{enumerate}
                \item Assume that $X$ is closed. 
                We can find some closed subset $X_0 \subseteq Y_0$ such that $X = X_0 \setminus (X_0 \cap Y_1)$.
                The natural embedding
                \[\scrA^{\oplus}_{ X_0} (\fr{h}, W) \subset \scrA^{\oplus}_{ Y_0} (\fr{h}, W)\]
                induces a functor
                \[ (i_X^Y)_! : \scrA^{\oplus}_{ X} (\fr{h}, W) \cong \scrA^{\oplus}_{ X_0} (\fr{h}, W) /\!\!/ \scrA^{\oplus}_{ X_0 \cap Y_1} (\fr{h}, W) \to \scrA^{\oplus}_{ Y_0} (\fr{h}, W) /\!\!/ \scrA^{\oplus}_{ Y_1} (\fr{h}, W) \cong \scrA^{\oplus}_{ Y} (\fr{h}, W).\]
                The functor $(i_X^Y)_!$ is canonically independent of the choice of $X_0$ and $Y_0$.
                This functor induces the desired $!$-pushforward
                \[(i_X^Y)_! : \BE_X (\fr{h}, W) \to \BE_Y (\fr{h}, W).\] 
                \item Assume that $X$ is open. Let $Z = Y \setminus X$ be the complementary closed subset. We can find $Z_0 \subset Y_0$ closed such that $Z = Z_0 \setminus (Z_0 \cap Y_1)$. Observe that $X = Y_0 \setminus (Z_0 \cup Y_1)$.
                There is a natural quotient functor
                 \[ (i_X^Y)^* : \scrA^{\oplus}_{ Y} (\fr{h}, W) \cong \scrA^{\oplus}_{ Y_0} (\fr{h}, W) /\!\!/ \scrA^{\oplus}_{ Y_1} (\fr{h}, W) \to \scrA^{\oplus}_{ Y_0} (\fr{h}, W) /\!\!/ \scrA^{\oplus}_{ Z_0 \cup Y_1} (\fr{h}, W) \cong \scrA^{\oplus}_{ X} (\fr{h}, W).\]
                 This functor is again independent of the choice of $X_0$ and $Z_0$. Moreover, it induces the desired $*$-pullback
                  \[(i_X^Y)^* : \BE_Y (\fr{h}, W) \to \BE_X (\fr{h}, W).\] 
        \end{enumerate}

        \subsection{Standard and Costandard Objects}

        Let $X \subseteq W$ be a locally closed subset and $w \in X$.
        Recall that there is a canonical object $b_w \in \BE_{\{w\}} (\fr{h}, W)$ corresponding to $R$ under the equivalence of categories (\ref{eq:single_stratum}).
        We define the \emph{standard} and \emph{costandard} objects in $\BE_X (\fr{h}, W)$ respectively by
        \[\Delta_w^X \coloneq (i_{\{w\}}^X)_! b_w \qquad\text{and}\qquad \nabla_w^X \coloneq (i_{\{w\}}^X)_* b_w.\]
        When $X = W$, we omit the superscript from the notation for these objects.
        
        By \cite[Lemma 6.9]{ARV}, $\BE_X (\fr{h}, W)$ is generated as a triangulated category by the objects $\Delta_w^X (n)$ with $w \in X$ and $n \in \Z$, or alternatively by the objects $\nabla_w (n)$ with $w \in X$ and $n \in \Z$.

        The $\star$-product of the standard and costandard objects is somewhat governed by the following proposition.
        \begin{proposition}[\!\!{\cite[Proposition 6.11]{ARV}}]\label{prop:arv_convolution}
                Let $x,y \in W$.
                \begin{enumerate}
                        \item If $\ell (xy) = \ell (x) + \ell (y)$, then we have isomorphisms
                        \[\Delta_{xy} \cong \Delta_x \star \Delta_y \qquad\text{and}\qquad \nabla_{xy} \cong \nabla_x \star \nabla_y.\]
                        \item We have isomorphisms 
                        \[\Delta_x \star \nabla_{x^{-1}} \cong \Delta_e \cong \nabla_{x^{-1}} \star \Delta_x.\]
                \end{enumerate}
        \end{proposition}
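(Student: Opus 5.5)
Since this proposition is cited from \cite[Proposition 6.11]{ARV}, the plan is to follow the Achar--Riche--Vay strategy, reducing everything to simple reflections. The basic input is the explicit description of $\Delta_s$ and $\nabla_s$ for $s \in S$. By the recollement for the closed subset $\{e\} \subset \{e,s\}$, these are the two-term complexes
\[\Delta_s \cong \bigl(B_s \to R(1)\bigr), \qquad \nabla_s \cong \bigl(R(-1) \to B_s\bigr),\]
where $B_s$ sits in degree $0$ and the maps are the counit and unit of $B_s$. To verify this, one checks that $(i_{\{e\}})^*$ kills the first complex: in $\scrA^{\oplus}_{\{e\}}$ the map $B_s \to R(1)$ becomes an isomorphism after quotienting by morphisms factoring through $e$. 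One also checks that the restriction of each complex to $\{s\}$ is $b_s$.

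For the case $x = s$ of part (2), compute $\Delta_s \star \nabla_s$ directly. It is the total complex
\[B_s(-1) \to B_s\otimes_R B_s \oplus R \to B_s(1).\]
Using the decomposition $B_s \otimes_R B_s \cong B_s(1) \oplus B_s(-1)$, which is valid in $\scrA^{\oplus}$ by Demazure surjectivity, Gaussian elimination cancels both copies of $B_s(\pm 1)$. This leaves $R = \Delta_e$. The same computation gives $\nabla_s\star\Delta_s \cong \Delta_e$. The point requiring care is that the components of the differentials onto the $B_s(\pm1)$ summands are isomorphisms. This follows from the standard one-color relations (needle/barbell) in Abe's bimodule setting.

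For part (1), I would argue by induction on $\ell(y)$, reducing to $y = s$ with $xs > x$. The claim is $\Delta_x\star\Delta_s \cong \Delta_{xs}$. I would characterize $\Delta_w$ as the unique object (up to isomorphism) such that:
\begin{itemize}
\item it is supported on $\{\le w\}$;
\item $(i_{\{w\}})^*$ of it is $b_w$;
\item $(i_{\{v\}})^!$ of it vanishes for $v<w$.
\end{itemize}
Equivalently, $\Delta_w$ lies in the left orthogonal of the objects $\nabla_v(n)$ with $v\ne w$, and $\Hom(\Delta_w, \nabla_w(n)) = \Hom(b_w, b_w(n))$. I then verify these properties for $\Delta_x\star\Delta_s$. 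Adjunction turns $\Hom(\Delta_x\star\Delta_s,\nabla_v)$ into $\Hom(\Delta_x,\nabla_v\star\nabla_s)$, using that $\Delta_s$ has inverse $\nabla_s$ by the case just proved. Next I show that $\nabla_v\star\nabla_s$ is $\nabla_{vs}$ if $vs>v$, and otherwise sits in a triangle built from $\nabla_{vs}$ and a shift of $\nabla_v$. This is a computation with the two-term complex for $\nabla_s$ and the known $\star B_s$-action on the subquotient categories $\scrA^{\oplus}_X$ for $X=\{v,vs\}$. The orthogonality $\Hom(\Delta_x,\nabla_u(n))=0$ for $u\neq x$, which follows from the recollement, then gives the required vanishing. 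The costandard statement follows by applying $\DD$, which is monoidal and exchanges $\Delta$ and $\nabla$.

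Finally, the general case of part (2) follows from part (1). Write $x = s_1\cdots s_k$ as a reduced expression. Then $\Delta_x \cong \Delta_{s_1}\star\cdots\star\Delta_{s_k}$ and $\nabla_{x^{-1}}\cong\nabla_{s_k}\star\cdots\star\nabla_{s_1}$, and the factors cancel telescopically using the rank-one case.

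I expect the main obstacle to be the rank-one computation of $\nabla_v\star\nabla_s$, or equivalently of $\Delta_x\star\Delta_s$, restricted to each stratum. It requires understanding how $-\star B_s$ interacts with the naive quotients $\scrA^{\oplus}_{X_0}/\!\!/\scrA^{\oplus}_{X_1}$. My first approach would be to use Abe's double leaves, i.e.\ the cellular structure, to show that $\star B_s$ sends the stratum $\{v\}$ into $\{v,vs\}$ with the expected filtration.
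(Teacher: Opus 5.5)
The paper gives no proof of this statement; it quotes \cite[Proposition 6.11]{ARV}. Several of your ingredients are sound. The two-term complexes for $\Delta_s$ and $\nabla_s$ are correct. However, $(i_{\{e\}})^*\Delta_s=0$ has to be checked by adjunction, not inside $\scrA^{\oplus}_{\{e\}}(\fr{h},W)$, which does not contain $B_s$: precomposition with the counit $\epsilon$ is an isomorphism $\Hom^\bullet(R(1),R)\to\Hom^\bullet(B_s,R)$, so $\Hom(\Delta_s,R(n)[m])=0$. The Gaussian elimination giving $\Delta_s\star\nabla_s\cong\Delta_e\cong\nabla_s\star\Delta_s$ and the telescoping deduction of (2) from (1) are also fine. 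The gap is in part (1), i.e.\ $\Delta_x\star\Delta_s\cong\Delta_{xs}$ for $xs>x$. This is the real content of the proposition, and your proposal does not prove it.

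\emph{The characterization.} Your third bullet is wrong. $\Delta_w$ has vanishing $*$-restrictions away from $w$, not $!$-restrictions: for example, $(i_{\{e\}})^!\Delta_s$ is represented by multiplication by $\alpha_s$, $R(-1)\to R(1)$, which is nonzero. Only your ``equivalently'' version, left orthogonality to $\nabla_v(n)[m]$ for $v\neq w$, is correct.

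\emph{Circularity.} The adjunction argument is circular. Its only nontrivial case is $v=x$, where you need $\Hom(\Delta_x,\nabla_x\star\nabla_s(n)[m])=0$. You get this from $\nabla_x\star\nabla_s\cong\nabla_{xs}$, which is the costandard half of the statement being proved and is equivalent to the standard half under $\DD$. So the adjunction only moves the problem to its dual. If you could prove your claim about $\nabla_v\star\nabla_s$, you would not need the adjunction at all.

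\emph{What is missing.} The whole difficulty is in that claim, and a computation in $\scrA^{\oplus}_{\{v,vs\}}(\fr{h},W)$ alone does not control the other strata. Two ingredients are needed.
\begin{enumerate}
\item[(a)] A base-change statement: for $X$ locally closed and right $\{e,s\}$-stable, $(i_X)^*(\scrF\star\Delta_s)\cong(i_X)^*\scrF\star\Delta_s$, for the induced right action of $\BE(\fr{h},W_{\{s\}})$ on $\BE_X(\fr{h},W)$. For finite $W$ this is Proposition~\ref{prop:pull_push_module_functors} with $I=\{s\}$, whose proof does not use the present statement. Taking $X=\{u,us\}$ with $x\notin X$ gives $(i_{\{u\}})^*(\Delta_x\star\Delta_s)=0$ for all $u\notin\{x,xs\}$.
\item[(b)] A two-stratum computation for $X=\{x,xs\}$. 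Here $(i_X)^*\Delta_x$ is represented by $B_{\underline{x}}$ for a reduced expression $\underline{x}$. You must show that the complex $B_{\underline{x}}B_s\xrightarrow{\id\otimes\epsilon}B_{\underline{x}}(1)$ in $K^b\scrA^{\oplus}_X(\fr{h},W)$ has zero $*$-restriction to the closed stratum $\{x\}$.
\end{enumerate}
Part (b) follows from the double leaves basis. In $\scrA^{\oplus}_X$, $\Hom^\bullet(B_{\underline{x}}B_s,B_{\underline{x}})$ is free of rank one over $R$ on $\id\otimes\epsilon$. Indeed, $(\underline{x},s)$ has a unique subexpression evaluating to $x$, and $\underline{x}$ has none evaluating to $xs$. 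Likewise $\Hom^\bullet(B_{\underline{x}}(1),B_{\underline{x}})$ is free of rank one on $\id$. Precomposition with $\id\otimes\epsilon$ is therefore an isomorphism, so the complex is left orthogonal to every $B_{\underline{x}}(n)[m]$. Its $*$-restriction to $\{xs\}$ is visibly $b_{xs}$.

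With (a) and (b) you obtain $\Delta_x\star\Delta_s\cong\Delta_{xs}$ directly, and the costandard version then follows by applying $\DD$.
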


        \subsection{Rigidity}

        Let $A$ be a graded commutative ring. We will write
        \[(-)^{\op} : \grbim{A} \to \grbim{A}\]
        for the autoequivalence which swaps the left and right action on a graded $A$-bimodule.

        We can use the above functor to construct an autoequivalence
        \[(-)^{\op} : \scrC (\fr{h}, W) \to \scrC (\fr{h}, W).\]
        To do so, we must specify the localization data for $M^{\op}$. Note that $M_Q^w$ is in fact a graded $Q$-bimodule, and so we define $(M^{\op})_Q^w = (M_Q^{w^{-1}})^{\op}$.
        We also observe that there is an isomorphism of graded $Q$-bimodules $Q \otimes_R M \cong M \otimes_R Q$. Explicitly, $M \otimes_R Q$ is a graded $Q$-bimodule since the $M_Q^w$'s are graded $Q$-bimodules.
        As a result, there is a homomorphism $Q \otimes_R M \to M \otimes_R Q$ which can easily be checked to be an isomorphism.
        We can then define a morphism of graded $Q$-bimodules $\xi_{M^{\op}} : M^{\op} \otimes_R Q \stackrel{\sim}{\to} \bigoplus_{w \in W} (M^{\op})_Q^w$ by the composition
        \[M^{\op} \otimes_R Q \cong (Q \otimes_R M)^{\op} \cong (M \otimes_R Q)^{\op} \stackrel{\xi_{M}^{\op}}{\to} \bigoplus_{w \in W} (M_Q^w)^{\op} = \bigoplus_{w \in W} (M^{\op})_Q^w.\]
        It can be readily seen from the definitions that $(M^{\op}, ((M^{\op})_Q^w)_{w \in W}, \xi_{M^{\op}})$ is an object in $\scrC (\fr{h}, W)$. 
        Likewise, it can be checked from the definitions that this process is functorial, and hence, produces an autoequivalence of $\scrC (\fr{h}, W)$.
        This functor is anti-monoidal in the sense that there is a natural isomorphism
        \begin{equation}M^{\op} \star N^{\op} \cong (N \star M)^{\op},\end{equation}
        for $M,N \in \scrC (\fr{h}, W)$.

        It is easy to see that $B_s^{\op} = B_s$. Therefore, for any sequence $\uw = (s_1, \ldots, s_k)$ of simple reflections, anti-monoidality implies that $B_{\uw}^{\op} = B_{\overline{\uw}}$, where $\overline{\uw} = (s_k, \ldots, s_1)$ denotes the reversed expression.
        In particular, $(-)^{\op}$ restricts to an autoequivalence
        \[(-)^{\op} : \scrA^{\oplus} (\fr{h}, W) \stackrel{\sim}{\to} \scrA^{\oplus} (\fr{h}, W). \]
        
        \begin{remark}
                By passing from Abe--Bott--Samelson bimodules to Elias--Williamson diagrammatics via \cite[Theorem 5.6]{Abe19}, one observes that $(-)^{\op}$ corresponds to the horizontal reflection of morphisms.
        \end{remark}

        We will now study how $(-)^{\op}$ interacts with the mixed derived category.
        Let $X \subseteq W$ be locally closed. We can find closed subsets $X_1 \subset X_0 \subset W$ such that $X = X_0 \setminus X_1$. Note that $X^{-1} = X_0^{-1} \setminus X_1^{-1}$.
        The functor $(-)^{\op}$ restricts to a functor
        \[(-)^{\op} : \scrA^{\oplus}_{X_0} (\fr{h}, W) \to \scrA^{\oplus}_{X_0^{-1}} (\fr{h}, W)\]
        which, by passing to naive quotients, defines a functor
        \[(-)^{\op} : \scrA^{\oplus}_{X} (\fr{h}, W) \to \scrA^{\oplus}_{X^{-1}} (\fr{h}, W). \]
        By taking bounded homotopy categories, we also get an equivalence of categories
        \[(-)^{\op} : \BE_X (\fr{h}, W) \to \BE_{X^{-1}} (\fr{h}, W).\]

        \begin{lemma}\label{lem:refl_commutes_with_sheaf_functors}
                Let $X \subseteq Y \subseteq W$ be finite locally closed subsets. 
                Then for $? \in \{*,!\}$, there are natural isomorphisms $(-)^{\op} \circ (i_X^Y)_? \cong (i_{X^{-1}}^{Y^{-1}})_? \circ (-)^{\op}$ and $(-)^{\op} \circ (i_X^Y)^? \cong (i_{X^{-1}}^{Y^{-1}})^? \circ (-)^{\op}$. 
        \end{lemma}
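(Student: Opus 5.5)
The plan is to reduce, using the formal properties of the sheaf functors listed above, to the two cases where the functors have the explicit descriptions recalled above: $(i_X^Y)_!$ for $X$ closed in $Y$, and $(i_X^Y)^*$ for $X$ open in $Y$. In these cases the statement should follow directly from the construction of $(-)^{\op}$ on naive quotients.

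\emph{Step 1: the closed and open cases.} Write $Y = Y_0 \setminus Y_1$.
\begin{enumerate}
\item Suppose $X$ is closed in $Y$, with $X = X_0 \setminus (X_0\cap Y_1)$. Then $X^{-1}$ is closed in $Y^{-1}$, and we may use $X_0^{-1} \subseteq Y_0^{-1}$ and $Y_1^{-1}$ in the construction. The functor $(i_X^Y)_!$ is induced by the inclusion $\scrA^{\oplus}_{X_0}(\fr{h},W)\subset\scrA^{\oplus}_{Y_0}(\fr{h},W)$. The functor $(-)^{\op}$ is the restriction of a single autoequivalence of $\scrA^{\oplus}(\fr{h},W)$, and it sends $\scrA^{\oplus}_{X_0}$ into $\scrA^{\oplus}_{X_0^{-1}}$ (because $B_{\uw}^{\op}=B_{\overline{\uw}}$). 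Hence $(-)^{\op}$ commutes strictly with these inclusions. It also carries morphisms factoring through $\scrA^{\oplus}_{Y_1}$ to morphisms factoring through $\scrA^{\oplus}_{Y_1^{-1}}$, so it descends compatibly to the naive quotients. Applying $K^b$ gives $(-)^{\op}\circ(i_X^Y)_! = (i_{X^{-1}}^{Y^{-1}})_!\circ(-)^{\op}$. Since the canonical independence of the choices of $X_0,Y_0$ from \cite{ARV} is respected, this identification is natural.
\item The same argument applies to the quotient functor $(i_X^Y)^*$ when $X$ is open.
\end{enumerate}

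\emph{Step 2: adjoints.} Since $(-)^{\op}$ is an equivalence, it transports adjunctions. For $X$ closed, $(i_X^Y)^!$ is right adjoint to $(i_X^Y)_!$, and $(i_X^Y)_*=(i_X^Y)_!$. Uniqueness of adjoints then gives the isomorphism for $(i_X^Y)^!$. Because $(i_X^Y)^*$ is left adjoint to $(i_X^Y)_* = (i_X^Y)_!$, it also gives the isomorphism for $(i_X^Y)^*$. Dually, for $X$ open, the isomorphism for $(i_X^Y)^*=(i_X^Y)^!$ yields the isomorphisms for its left adjoint $(i_X^Y)_!$ and its right adjoint $(i_X^Y)_*$.

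\emph{Step 3: the general case.} A locally closed $X\subseteq Y$ factors as $X\subseteq \overline{X}\cap Y\subseteq Y$. Here $X$ is open in $\overline{X}\cap Y$, and $\overline X\cap Y$ is closed in $Y$. Inversion is an automorphism of the Bruhat order, so it preserves this factorization. Compatibility with composition \cite[Lemma 5.12]{ARV} then reduces the general case to Steps 1 and 2. Finiteness of $X,Y$ guarantees that all the functors involved exist.

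\emph{Main obstacle.} I expect the main difficulty to lie in Step 3, together with the fact that in \cite{ARV} the general functors are defined via the recollement rather than as this composite. One must check that the isomorphisms obtained are independent of the chosen factorization, or at least that they are natural, which is all the statement asks. To handle this, I would first verify that the functors of \cite[\S5.4]{ARV} are characterized up to unique isomorphism by the closed/open cases and adjunction, and then appeal to this uniqueness.
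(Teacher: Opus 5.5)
Your proposal is correct and matches the paper's proof essentially step for step. You handle the closed case through the inclusion of naive quotients and the open case through the quotient functor, transport the adjunctions through the involution $(-)^{\op}$, and reduce the general case via the factorization $X \subseteq \overline{X} \cap Y \subseteq Y$ together with \cite[Lemma 5.12]{ARV}. The ``main obstacle'' you flag does not arise: the lemma only asks for natural isomorphisms, and composing the isomorphisms of Steps 1--2 with the canonical isomorphisms of \cite[Lemma 5.12]{ARV} already gives natural ones.
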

        \begin{proof}
                Consider the closure $\overline{X}$ of $Y$. We have that $X \subseteq \overline{X} \subset Y$. For $? \in \{*,!\}$, there are canonical isomorphisms
                \[(i_X^Y)_? \cong (i_{\overline{X}}^Y)_? \circ (i_X^{\overline{X}})_? \qquad\text{and}\qquad (i_X^Y)^? \cong  (i_X^{\overline{X}})^? \circ (i_{\overline{X}}^Y)^?.\]
                Since $X$ is open in $\overline{X}$ and $\overline{X}$ is closed in $Y$, it suffices to prove the lemma when $X$ is either open or closed.

                Assume that $X$ is closed. Let $Y_0 \subset Y_1 \subset W$ be closed subsets such that $Y = Y_0 \setminus Y_1$.
                We can then find some closed subset $X_0 \subseteq Y_0$ such that $X = X_0 \setminus (X_0 \cap Y_1)$. 
                In this case, $(i_X^Y)_! =(i_X^Y)_*$ is induced from the fully faithful functor
                \[ \scrA^{\oplus}_{X} (\fr{h}, W) \cong \scrA^{\oplus}_{X_0} (\fr{h}, W) /\!\!/ \scrA^{\oplus}_{ X_0 \cap Y_1} (\fr{h}, W) \to \scrA^{\oplus}_{ Y_0} (\fr{h}, W) /\!\!/ \scrA^{\oplus}_{ Y_1} (\fr{h}, W) \cong \scrA^{\oplus}_{ Y} (\fr{h}, W).\]
                It can then be checked from the definitions that $(-)^{\op} \circ (i_X^Y)_! \cong (i_{X^{-1}}^{Y^{-1}})_! \circ (-)^{\op}$. Since $(-)^{\op}$ is an involution, we can take adjoints to conclude that $(-)^{\op} \circ (i_X^Y)^? \cong (i_{X^{-1}}^{Y^{-1}})^? \circ (-)^{\op}$ for $? \in \{*,!\}$ as well.
        
                Now assume that $X$ is open. Let $Z = Y \setminus X$ be the complementary closed subset. 
                Take $Y = Y_0 \setminus Y_1$ as above.  
                We can find $Z_0 \subset Y_0$ closed such that $Z = Z_0 \setminus (Z_0 \cap Y_1)$. Note that $X = Y_0 \setminus (Z_0 \cup Y_1)$. 
                In this case, $(i_X^Y)^! =(i_X^Y)^*$ is induced from the full functor
                \[ \scrA^{\oplus}_{Y} (\fr{h}, W) \cong \scrA^{\oplus}_{Y_0} (\fr{h}, W) /\!\!/ \scrA^{\oplus}_{ Y_1} (\fr{h}, W) \to \scrA^{\oplus}_{ Y_0} (\fr{h}, W) /\!\!/ \scrA^{\oplus}_{ Z_0 \cup Y_1} (\fr{h}, W) \cong \scrA^{\oplus}_{ X} (\fr{h}, W).\]
                Again it can be checked from the definitions that  $(-)^{\op} \circ (i_X^Y)^* \cong (i_{X^{-1}}^{Y^{-1}})^* \circ (-)^{\op}$. Since $(-)^{\op}$ is an involution, we can take adjoints to conclude that $(-)^{\op} \circ (i_X^Y)^? \cong (i_{X^{-1}}^{Y^{-1}})^? \circ (-)^{\op}$ for $? \in \{*,!\}$ as well.
        \end{proof}

        Recall that $B_s = R \otimes_{R^s} R (1)$ and $B_s \otimes_R B_s \cong R \otimes_{R^s} R \otimes_{R^s} R (1)$.
        Define $\partial_s : R \to R^s$ by $\partial_s (f) = (f-s(f))/\alpha_s$ for $f \in R$. By Demazure surjectivity for $\fr{h}$, we can fix some $\delta_s \in R$ such that $\partial_s (\delta_s) = 1$.
        For $s \in S$, there are morphisms
        \begin{align*}
                \cup_s &: R \to B_s \otimes_R B_s, &  1 &\mapsto \delta_s \otimes 1 \otimes 1 - 1 \otimes 1 \otimes s(\delta_s), \\
                \cap_s &: B_s \otimes_R B_s \to R, &  f \otimes g \otimes h &\mapsto f \partial_s (g) h.
        \end{align*}
        More generally, for a sequence $\uw = (s_1, \ldots, s_k)$ of simple reflections, we may define morphisms
        \[\cup_{\uw} \coloneq (\id_{B_{(s_1, \ldots, s_{k-1})}} \otimes \cup_{s_k} \otimes \id_{B_{(s_{k-1}, \ldots ,s_1)}}) \circ \ldots \circ (\id_{B_{s_1}} \otimes \cup_{s_2} \otimes \id_{B_{s_1}}) \circ \cup_{s_1} : R \to B_{\uw} \otimes_R B_{\overline{\uw}},\]
        \[\cap_{\uw} \coloneq \cap_{s_k} \circ (\id_{B_{s_k}} \otimes \cap_{s_{k-1}} \otimes \id_{B_{s_k}}) \circ \ldots \circ (\id_{B_{(s_k, \ldots, s_{2})}} \otimes \cap_{s_1} \otimes \id_{B_{(s_{2}, \ldots, s_{k})}}) : B_{\overline{\uw}} \otimes_R B_{\uw} \to R. \]

        We can then define morphisms $\cup_{\scrF} : R \to \scrF \star \DD (\scrF)^{\op}$ and $\cap_{\scrF} : \DD(\scrF)^{\op} \star \scrF \to R$ for any $\scrF \in \BE (\fr{h}, W)$ by extending from the Bott--Samelson objects.
        
        \begin{lemma}\label{lem:duals}
                Let $\scrF \in \BE (\fr{h}, W)$. Then there are equalities
                \[(\id_{\scrF} \star \cap_{\scrF}) \circ (\cup_{\scrF} \star \id_{\scrF}) = \id_{\scrF} = (\cap_{\DD(\scrF)^{\op}} \star \id_{\scrF}) \circ (\id_{\scrF} \star \cup_{\DD(\scrF)^{\op} }).\]
        \end{lemma}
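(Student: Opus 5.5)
Lemma \ref{lem:duals} is a pair of zigzag identities, so the plan is to reduce to Bott--Samelson objects and then to single colours. Both $\cup_{\scrF}$ and $\cap_{\scrF}$ are assembled termwise from the maps $\cup_{\uw}$ and $\cap_{\uw}$, with the evident diagonal and sign conventions. Hence each side of the claimed equalities is computed componentwise in the total complex. It therefore suffices to prove the identities for $\scrF = B_{\uw}(n)$ a single Bott--Samelson object in $\scrA^{\oplus}(\fr{h},W)$, viewed as a complex in degree $0$. The point to check is that the Koszul signs appearing in $\scrF \star \DD(\scrF)^{\op} \star \scrF$ cancel in the composite. They do: the cup is nonzero only on matching pairs of summands $(B_{\uw}(n))^i \otimes (B_{\uw}(n))^{-i}$ (or rather their duals), and the cap then pairs the same indices, so any sign is introduced and removed. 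The remaining summands of $\scrF \star \DD(\scrF)^{\op} \star \scrF$ contribute nothing to either composite.

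Now fix $\uw = (s_1,\dots,s_k)$. We have $\DD(B_{\uw}(n))^{\op} \cong B_{\overline{\uw}}(-n)$, and the shifts cancel. By the recursive definition, $\cup_{\uw}$ is the nested composite of the $\cup_{s_i}$ and $\cap_{\uw}$ the nested composite of the $\cap_{s_i}$. Using the interchange law in the monoidal category $\scrA^{\oplus}(\fr{h},W)$, the composite $(\id \star \cap_{\uw}) \circ (\cup_{\uw} \star \id)$ then factors as an iterated composite of single-colour zigzags on the innermost strand. More precisely, one peels off $s_k$ first, then $s_{k-1}$, and so on, exactly as in the standard argument that duals of tensor products are tensor products of duals in reverse order. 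The second identity is handled the same way. There $\DD(\scrF)^{\op}$ plays the role of $\scrF$, its double "dual" $\DD(\DD(\scrF)^{\op})^{\op} \cong \scrF$ uses $\DD\circ\DD\cong\id$ and the involutivity of $(-)^{\op}$, and the cup and cap for the reversed word $\overline{\uw}$ appear. So everything reduces to the two one-colour identities $(\id_{B_s}\otimes\cap_s)\circ(\cup_s\otimes\id_{B_s}) = \id_{B_s} = (\cap_s\otimes\id_{B_s})\circ(\id_{B_s}\otimes\cup_s)$.

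The one-colour check is a direct computation. Write $1\otimes 1 \in B_s = R\otimes_{R^s}R(1)$. The first composite sends
\[1\otimes 1 \mapsto \delta_s\otimes 1\otimes 1\otimes 1 - 1\otimes 1\otimes s(\delta_s)\otimes 1 \mapsto \delta_s\,\partial_s(1)\otimes 1 - 1\otimes \partial_s(s(\delta_s)).\]
Since $\partial_s(1)=0$ and $\partial_s(s(\delta_s)) = -1$, this is $1\otimes 1$. Both maps are $R$-bimodule maps and $1\otimes 1$ generates $B_s$, so the composite is the identity. The mirror identity is symmetric:
\[1\otimes 1 \mapsto \partial_s(\delta_s)\otimes 1 - 1\cdot\partial_s(1)\cdot s(\delta_s)\ \text{term} = 1\otimes 1.\]
One also has to confirm that $\cup_s$ and $\cap_s$ are genuinely morphisms in $\scrC(\fr{h},W)$, i.e.\ that they respect the localization data $(B_s\otimes B_s)_Q^w$. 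This follows because $\cup_s(1)$ lies in the $e$-component, by the explicit description of $(B_s)_Q^e$ and $(B_s)_Q^s$, and dually for $\cap_s$.

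The main obstacle I expect is the bookkeeping in the first step. One must make precise how $\cup_{\scrF}$ and $\cap_{\scrF}$ are "extended from Bott--Samelson objects" for a general complex, including morphisms between different Bott--Samelson summands and the sign conventions in the monoidal structure on $K^b$. It must also be checked that the zigzag composite involves only matching pairs of summands, so that the off-diagonal differentials do not interfere. I would handle this by choosing $\scrF$ termwise as direct sums of shifted $B_{\uw}$ and defining $\cup$ and $\cap$ as sums over the terms with the standard signs $(-1)^i$. The identity then holds on the nose at the chain level, hence in $\BE(\fr{h},W)$.
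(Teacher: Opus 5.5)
Your proposal is correct and follows the same route as the paper: reduce to a single Bott--Samelson object $B_{\uw}$, peel it down to $B_s$ by monoidality using the nested definitions of $\cup_{\uw}$ and $\cap_{\uw}$, and verify the two one-colour zigzags directly. Your explicit computation (using $\partial_s(1)=0$ and $\partial_s(s(\delta_s))=-1$), the termwise sign and diagonal-summand bookkeeping, and the check that $\cup_s$ and $\cap_s$ respect the localization data all correctly fill in what the paper leaves as ``checked from the definitions.''
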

        \begin{proof}
                It suffices to prove the equalities when $\scrF = B_{\uw}$ for a sequence $\uw$ of simple reflections. By monoidality, we can further reduce to the case when $\scrF = B_s$.
                This case of the lemma can then be checked from the definitions.
        \end{proof}

        \begin{remark}
                Lemma \ref{lem:duals} states that $\BE (\fr{h}, W)$ is a rigid (in fact, pivotal) monoidal category. The operation of taking duals corresponds to the functor
                \[\BE (\fr{h}, W) \to \BE (\fr{h}, W), \qquad \scrF \mapsto \DD (\scrF)^{\op}.\]
                In particular, there is a natural isomorphism
                \[\Hom (\scrF, \scrG \star \scrH) \cong \Hom (\DD (\scrG)^{\op} \star \scrF, \scrH)\]
                for all $\scrF, \scrG, \scrH \in \BE (\fr{h}, W)$.
        \end{remark}

        \subsection{Parabolic Induction} 

         Fix $I \subseteq S$ and write $W_I$ for the parabolic subgroup of $W$ generated by $I$.
        We can now consider the composition,
        \[\begin{tikzcd}
{\iota : \BE (\fr{h}, W_I) \cong \BE_{W_I} (\fr{h}, W)} \arrow[rr, "(i_{W_I})_*"] & & {\BE_{W} (\fr{h}, W) \cong \BE (\fr{h}, W).}
\end{tikzcd}\]
        We call $\iota$ the \emph{parabolic induction functor}. 
        Note that $\iota$ is fully faithful and monoidal. When $W_I$ is finite, parabolic induction admits both a left and right adjoint, denoted $\iota^L$ and $\iota^R$ respectively. 
        They are given by the compositions
        \[\begin{tikzcd}
{\iota^L : \BE (\fr{h}, W) \cong \BE_{W} (\fr{h}, W)} \arrow[rr, "(i_{W_I})^*"] &  & {\BE_{W_I} (\fr{h}, W) \cong \BE (\fr{h}, W_I),}
\end{tikzcd}\]
        \[\begin{tikzcd}
{\iota^R : \BE (\fr{h}, W) \cong \BE_{W} (\fr{h}, W)} \arrow[rr, "(i_{W_I})^!"] &  & {\BE_{W_I} (\fr{h}, W) \cong \BE (\fr{h}, W_I).}
\end{tikzcd}\]
We call the functors $\iota^L$ and $\iota^R$ the \emph{parabolic restriction functors}.

        We will now assume that $W$ itself is finite.
        Let $X$ be a left $W_I$-stable subset of $W$. If $X$ is closed, then by \cite[Corollary 6.4]{ARV}, $\iota (\scrF) \star (i_X)_* (\scrG)$ is in the essential image of $(i_X)_*$ for $\scrF \in \BE (\fr{h}, W_I)$ and $\scrG \in \BE_X (\fr{h}, W)$.
        We can then define a bifunctor
        \begin{equation}\label{eq:left_WI_action}
                (-) \wistar (-) : \BE (\fr{h}, W_I) \times \BE_X (\fr{h}, W) \to \BE_X (\fr{h}, W) \qquad (\scrF, \scrG) \mapsto (i_X)^* (\iota (\scrF) \star (i_X)_* (\scrG)).
        \end{equation}
        The discussion above implies that $\wistar$ makes $\BE_X (\fr{h}, W)$ into a left $\BE (\fr{h}, W_I)$-module.
        For a general $X$, we can write $X = X_0 \setminus X_1$ with $X_1 \subset X_0 \subset W$ closed and left $W_I$-stable.
        By \cite[Remark 5.7]{ARV}, the functor $(i_X^{X_0})^*$ identifies $\BE_X (\fr{h}, W)$ with the Verdier quotient $\BE_{X_0} (\fr{h}, W) / \BE_{X_1} (\fr{h}, W)$.
        In particular, one can readily check that $\iota (\mathcal{F}) \star (i_X)_* \mathcal{G}$ is in the essential image of $(i_X)_*$.
        It also follows from the Verdier quotient description that $\wistar$ induces a left action of $\BE (\fr{h}, W_I)$ on $\BE_X (\fr{h}, W)$, which will also be denoted by $\wistar$. 
        Since $(i_X^{X_0})_*$ is a section of $(i_X^{X_0})^*$, one has that the induced action is defined by the same formula as (\ref{eq:left_WI_action}). 
        Alternatively, since $(i_X^{X_0})_!$ is also a section, it can be defined by the variation of (\ref{eq:left_WI_action}) where the $(i_X)_*$ is replaced by $(i_X)_!$.
        The rigid monoidal structure on $\BE (\fr{h}, W_I)$ from Lemma \ref{lem:duals} implies that there are natural isomorphisms
        \begin{equation}\label{eq:rigid_hom_module}
                \Hom (\scrF \star^I \scrG, \scrH) \cong \Hom (\scrG, \DD (\scrF)^{\op} \star^I \scrH)
        \end{equation}
        where $\scrF \in \BE (\fr{h}, W_I)$ and $\scrG, \scrH \in \BE_X (\fr{h}, W)$.
        
        Likewise, if $X$ is instead right $W_I$-stable, an analogous argument allows us to construct a bifunctor 
        \[(-) \wistar (-) : \BE (\fr{h}, W) \times \BE_X (\fr{h}, W_I) \to \BE_X (\fr{h}, W) \qquad (\scrG, \scrF) \mapsto (i_X)^* ((i_X)_* (\scrG) \star \iota (\scrF))\]
        which makes $\BE_X (\fr{h}, W)$ into a right $\BE (\fr{h}, W_I)$-module.
        
        \begin{proposition}\label{prop:pull_push_module_functors}
                Assume that $W$ is finite, and let $X \subseteq Y \subseteq W$ be locally closed subsets. Let $\scrF \in \BE (\fr{h}, W_I)$, $\scrG \in \BE_X (\fr{h}, W)$,  and $\scrH \in \BE_Y (\fr{h}, W)$.
                \begin{enumerate}
                        \item Assume that $X$ and $Y$ are left $W_I$-stable. Then there are natural isomorphisms
                        \begin{align*}
                                \scrF \wistar (i_X^Y)_* \scrG &\cong (i_X^Y)_* (\scrF \wistar \scrG), & \scrF \wistar (i_X^Y)_! \scrG &\cong (i_X^Y)_! (\scrF \wistar \scrG), \\
                                \scrF \wistar (i_X^Y)^* \scrH &\cong (i_X^Y)^* (\scrF \wistar \scrH), & \scrF \wistar (i_X^Y)^! \scrH &\cong (i_X^Y)^! (\scrF \wistar \scrH).
                        \end{align*}
                        \item Assume that $X$ and $Y$ are right $W_I$-stable. Then there are natural isomorphisms
                        \begin{align*}
                                (i_X^Y)_* \scrG \wistar \scrF &\cong (i_X^Y)_* (\scrG \wistar \scrF), & (i_X^Y)_! \scrG \wistar \scrF &\cong (i_X^Y)_! (\scrF \wistar \scrG), \\
                                (i_X^Y)^* \scrH \wistar \scrF &\cong (i_X^Y)^* (\scrH \wistar \scrF), & (i_X^Y)^! \scrH \wistar \scrF &\cong (i_X^Y)^! (\scrH \wistar \scrF).
                        \end{align*}
                \end{enumerate} 
        \end{proposition}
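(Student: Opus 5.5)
The plan is to prove part (1) and then deduce part (2) from it by applying $(-)^{\op}$. For part (1), I first reduce to the case where $Y = W$, or more precisely where $Y$ is closed in $W$. Since the sheaf functors are compatible with composition, I factor $i_X^Y$ through the closure as in the proof of Lemma \ref{lem:refl_commutes_with_sheaf_functors}, which leaves two cases: $X$ closed in $Y$ and $X$ open in $Y$. Note that if $X$ is left $W_I$-stable, so is its closure, because left multiplication by $s \in I$ permutes $W$ while moving Bruhat order in a controlled way. More concretely, if $w \in \overline{X}$ then $sw \in \overline{X}$ by the lifting property of Bruhat order.

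Next I write $Y = Y_0 \setminus Y_1$ with $Y_1 \subset Y_0$ closed and left $W_I$-stable, and use the Verdier quotient description $\BE_Y \cong \BE_{Y_0}/\BE_{Y_1}$ from \cite[Remark 5.7]{ARV}. In this description $\wistar$ on $\BE_Y$ is induced from the action on $\BE_{Y_0}$, which in turn is the restriction of $\iota(\scrF)\star(-)$ on $\BE(\fr{h},W)$ to the essential image of $(i_{Y_0})_*$, a subcategory stable under this action by \cite[Corollary 6.4]{ARV}. The two cases are then as follows.

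In the \emph{closed case}, $(i_X^Y)_! = (i_X^Y)_*$ is induced by the inclusion of $\BE_{X_0}$ into $\BE_{Y_0}$ with $X_0$ closed and $W_I$-stable. Both actions are restrictions of the same functor $\iota(\scrF)\star(-)$, so the isomorphism $\scrF \wistar (i_X^Y)_*\scrG \cong (i_X^Y)_*(\scrF\wistar\scrG)$ is essentially tautological.

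In the \emph{open case}, $(i_X^Y)^* = (i_X^Y)^!$ is the Verdier quotient functor $\BE_{Y_0}/\BE_{Y_1} \to \BE_{Y_0}/\BE_{Z_0\cup Y_1}$. This functor commutes with the induced action by construction of the action on quotients.

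The remaining four functors follow by adjunction, and this is the step I expect to be the main obstacle. For $\scrF$ fixed, the functor $\scrF\wistar(-)$ is an equivalence with inverse $\DD(\scrF)^{\op}\wistar(-)$, using rigidity from Lemma \ref{lem:duals} and (\ref{eq:rigid_hom_module}). Hence a commutation of $\scrF\wistar(-)$ with a functor $G$ yields, by taking adjoints, a commutation of $\DD(\scrF)^{\op}\wistar(-)$ with the adjoint of $G$. Replacing $\scrF$ by $\DD(\scrF)^{\op}$, using that this operation is an involution, gives the statement for every $\scrF$. Concretely:
\begin{itemize}
\item in the closed case, commuting with $(i_X^Y)_*$ gives commutation with its left adjoint $(i_X^Y)^*$, and commuting with $(i_X^Y)_!$ gives commutation with its right adjoint $(i_X^Y)^!$;
\item in the open case, commuting with $(i_X^Y)^*$ gives commutation with its right adjoint $(i_X^Y)_*$, and commuting with $(i_X^Y)^!$ gives commutation with its left adjoint $(i_X^Y)_!$.
\end{itemize}
The care needed is in checking that the isomorphisms obtained this way are natural in $\scrF$ as well. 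This follows from the naturality of the duality isomorphisms (\ref{eq:rigid_hom_module}) via a Yoneda argument. Composing the closed and open cases then handles a general locally closed $X$.

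For part (2), apply $(-)^{\op}$. By anti-monoidality it interchanges the right action on $\BE_X$ with the left action on $\BE_{X^{-1}}$, for which $X^{-1}$ is left $W_I$-stable, and it sends $\scrF$ to $\scrF^{\op}\in\BE(\fr{h},W_I)$. Lemma \ref{lem:refl_commutes_with_sheaf_functors} shows that $(-)^{\op}$ intertwines the sheaf functors. So part (2) follows from part (1) applied to $X^{-1}\subseteq Y^{-1}$. (The displayed $(i_X^Y)_!\scrG\wistar\scrF\cong(i_X^Y)_!(\scrF\wistar\scrG)$ should read $(i_X^Y)_!(\scrG\wistar\scrF)$.)
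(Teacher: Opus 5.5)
Your argument is correct in substance but organized differently from the paper's, and one justification needs repair.

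\textbf{Comparison with the paper.} The paper never splits into closed and open inclusions. It proves both pushforward isomorphisms directly for an arbitrary locally closed $X \subseteq Y$: it writes $\scrF \wistar (i_X^Y)_*\scrG \cong (i_Y)^*(\iota(\scrF)\star(i_X)_*\scrG)$ and uses that $\iota(\scrF)\star(i_X)_*\scrG$ lies in the essential image of $(i_X)_*$ (likewise with $(i_X)_!$). It then gets the pullback isomorphisms from a Yoneda computation with (\ref{eq:rigid_hom_module}), and leaves part (2) as ``symmetric''.

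You instead factor through the closure; your check that closures of left $W_I$-stable sets stay left $W_I$-stable, via $sw \le \max(x, sx)$, is fine. You read off the directly constructed functor in each case from the naive-quotient descriptions: the pushforward for a closed inclusion, the Verdier-quotient pullback for an open one. You get the remaining functors by passing to adjoints and then compose.

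Your route needs the essential-image property only for closed $X_0$, where it is \cite[Corollary 6.4]{ARV}, rather than for general locally closed $X$. The price is an extra reduction and a final composition step; the paper's route is shorter.

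Deducing (2) from (1) via $(-)^{\op}$, anti-monoidality and Lemma \ref{lem:refl_commutes_with_sheaf_functors} is a clean way to make ``symmetric argument'' precise. You should also apply that lemma to $W_I \subseteq W$ to get $\iota(\scrF)^{\op} \cong \iota(\scrF^{\op})$. You are right about the typo in the second displayed isomorphism of (2).

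\textbf{The repair.} $\scrF \wistar (-)$ is \emph{not} in general an equivalence with inverse $\DD(\scrF)^{\op}\wistar(-)$. For $s \in I$ one has $\DD(B_s)^{\op} \star B_s = B_s \star B_s \cong B_s(1) \oplus B_s(-1) \not\cong R$.

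Rigidity gives only adjunctions, and that is all your mate argument actually uses. You do need them on both sides, though. In the closed case, passing from $(i_X^Y)_*$ to its left adjoint $(i_X^Y)^*$ requires a left adjoint of $\scrF\wistar(-)$, while passing from $(i_X^Y)_!$ to $(i_X^Y)^!$ requires a right adjoint; the open case is symmetric.

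The right adjoint is exactly (\ref{eq:rigid_hom_module}). Applying (\ref{eq:rigid_hom_module}) to $\DD(\scrF)^{\op}$ and using $\DD(\DD(\scrF)^{\op})^{\op}\cong\scrF$ shows that $\DD(\scrF)^{\op}\wistar(-)$ is also a left adjoint. This is the pivotality coming from both zig-zag identities in Lemma \ref{lem:duals}. With ``inverse'' replaced by ``two-sided adjoint'', the rest of your argument goes through unchanged.
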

        \begin{proof}
                We will just prove (1) as (2) follows from a symmetric argument.
                First, we will show the isomorphism
                \[ \scrF \wistar (i_X^Y)_* \scrG \cong (i_X^Y)_* (\scrF \wistar \scrG).\]
                The other pushforward isomorphism follows from a similar argument using the earlier observation that $(i_X)_*$ can be replaced by $(i_X)_!$ in the definition of the $\wistar$-bifunctor from (\ref{eq:left_WI_action}).
                We can check the isomorphism from the definitions and using the observation that $\iota (\mathcal{F}) \star (i_X)_* \mathcal{G}$ is in the essential image of $(i_X)_*$.
                \begin{align*}
                        \mathcal{F} \wistar (i_X^Y)_* \scrG &\cong (i_Y)^* (\iota (\scrF) \star (i_X)_* \scrG) \\
                        &\cong (i_X)_* (i_X)^* (i_Y)^*  (\iota (\scrF) \star (i_X)_* \scrG) \\
                        &\cong (i_X^Y)_* (\scrF \wistar \scrG).
                \end{align*}
                
                Next, we will show there is an isomorphism
                \[ \scrF \wistar (i_X^Y)^* (\scrH) \cong (i_X^Y)^* (\scrF \wistar \scrH).\]
                The remaining isomorphism follows from a similar argument.
                Let $\scrE \in \BE_X (\fr{h}, W)$. We can then compute
                \begin{align*}
                        \Hom (\scrF \wistar (i_X^Y)^* \scrH, \scrE) &\cong \Hom ((i_X^Y)^* \scrH, \DD (\scrF)^{\op} \wistar \scrE) \\
                        &\cong \Hom ( \scrH, (i_X^Y)_* ( \DD (\scrF)^{\op}  \wistar \scrE )) \\
                        &\cong \Hom (\scrH,  \DD (\scrF)^{\op} \wistar (i_X^Y)_* \scrE) \\
                        &\cong \Hom (\scrF \wistar \scrH,  (i_X^Y)_* \scrE) \\
                        &\cong \Hom ((i_X^Y)^* (\scrF \wistar \scrH), \scrE).
                \end{align*}
                Here the first and fourth isomorphisms follow from (\ref{eq:rigid_hom_module}) and the third isomorphism follows from the earlier observation that $(i_X^Y)_*$ is left $\BE (\fr{h}, W_I)$-linear.
        \end{proof}

        For this paper, we will only need the special case of the proposition where $X = W_I$ and $Y = W$. This case is provided by the following corollary which simply unpacks the notation.

        \begin{corollary}\label{cor:par_linearity}
               There are natural isomorphisms
                   \begin{align*}
                \scrF \star \iota^L (\scrG) &\cong \iota^L (\iota (\scrF) \star \scrG), & \scrF \star \iota^R (\scrG) &\cong \iota^R (\iota (\scrF) \star \scrG), \\
                \iota^L (\scrG) \star \scrF &\cong \iota^L (\scrG \star \iota (\scrF)), & \iota^R (\scrG) \star \scrF &\cong \iota^R (\scrG \star \iota (\scrF)),
        \end{align*}
        for all $\scrF \in \BE (\fr{h}, W_I)$ and $\scrG \in \BE (\fr{h}, W)$.
        \end{corollary}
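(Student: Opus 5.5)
The plan is to specialize Proposition \ref{prop:pull_push_module_functors} to $X = W_I$ and $Y = W$, and then to identify each $\wistar$-action with an honest $\star$-product in $\BE (\fr{h}, W_I)$ or $\BE (\fr{h}, W)$. Both $W_I$ and $W$ are left and right $W_I$-stable, so parts (1) and (2) of the proposition both apply. Under the identification (\ref{eq:BE_W}), $\iota^L = (i_{W_I})^*$ and $\iota^R = (i_{W_I})^!$. Taking $\scrH = \scrG \in \BE_W (\fr{h}, W)$, part (1) gives
\[\scrF \wistar (i_{W_I})^* \scrG \cong (i_{W_I})^* (\scrF \wistar \scrG) \qquad\text{and}\qquad \scrF \wistar (i_{W_I})^! \scrG \cong (i_{W_I})^! (\scrF \wistar \scrG).\]
Part (2) gives the analogous statements with the actions on the right.

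It remains to unpack the two actions. On $\BE_W (\fr{h}, W) \cong \BE (\fr{h}, W)$, the set $X = W$ is closed and $(i_W)_* = (i_W)^* = \id$. Hence (\ref{eq:left_WI_action}) reduces to $\scrF \wistar \scrG = \iota(\scrF) \star \scrG$, and similarly on the right.

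On $\BE_{W_I} (\fr{h}, W)$, the set $W_I$ is closed, so the action is $\scrF \wistar \scrG' = (i_{W_I})^* (\iota(\scrF) \star (i_{W_I})_* \scrG')$. Under the identification $\BE_{W_I}(\fr{h},W) \cong \BE(\fr{h},W_I)$, write $\scrG' \leftrightarrow \scrG''$, so that $(i_{W_I})_* \scrG' = \iota(\scrG'')$. Then $\iota(\scrF) \star \iota(\scrG'') \cong \iota(\scrF \star \scrG'')$ because $\iota$ is monoidal. Since $\iota$ is fully faithful, $(i_{W_I})^* (i_{W_I})_* \cong \id$. Together these give $\scrF \wistar \scrG' \leftrightarrow \scrF \star \scrG''$.

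Substituting these identifications into the displayed isomorphisms produces the four isomorphisms of the corollary. Naturality in $\scrF$ and $\scrG$ is inherited from the proposition and from the monoidal structure of $\iota$.

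The only point needing care is that $\iota$ is monoidal compatibly with the identification $\BE (\fr{h}, W_I) \cong \BE_{W_I} (\fr{h}, W)$. I expect this to be immediate, since both monoidal structures come from $\otimes_R$ on Bott--Samelson bimodules and $\scrA^{\oplus}_{W_I}(\fr{h},W)$ is literally the subcategory generated by $B_{\uw}$ with $\uw$ in $I$. So I anticipate no genuine obstacle: the corollary is a notational unpacking.
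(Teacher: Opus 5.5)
Your proposal is correct and matches the paper's argument: the paper obtains the corollary by specializing Proposition \ref{prop:pull_push_module_functors} to $X = W_I$, $Y = W$ and unpacking the notation. Your explicit identification of the $\wistar$-actions, using $(i_{W_I})^* (i_{W_I})_* \cong \id$ together with the monoidality of $\iota$, is exactly the unpacking that the paper leaves implicit.
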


        \section{Relative Serre Duality}

        For the remainder of the paper, we will assume that $W$ is finite.
        Fix $I \subseteq S$, and write $W_I$ for the parabolic subgroup of $W$ generated by $I$. 
        We denote by $w_0$ the longest element in $W$ and $w_I$ the longest element in $W_I$.
        Define the \emph{full twist} objects,
        \[ \FT_W \coloneq \Delta_{w_0} \star \Delta_{w_0} \qquad\text{and}\qquad \FT_W^{-1} \coloneq \nabla_{w_0} \star \nabla_{w_0}.\]
        By Proposition \ref{prop:arv_convolution}, there are isomorphisms $\FT_W \star \FT_W^{-1} \cong \Delta_e \cong \FT_W^{-1} \star \FT_W$.
        We also define the \emph{relative full twist} object 
        \[ \FT_{W, I} \coloneq \iota (\FT_{W_I}^{-1}) \star \FT_W. \]

        \begin{theorem}[Relative Serre Duality]\label{thm:rel_serre_duality}
                There are natural isomorphisms of functors 
                \[\iota^L (\FT_{W, I} \star -) \cong  \iota^R \cong \iota^L (- \star \FT_{W, I}).\]
        \end{theorem}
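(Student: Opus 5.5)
Via Corollary \ref{cor:par_linearity}, $\iota^L(\iota(\FT^{-1}_{W_I})\star\FT_W\star\scrG)\cong \FT_{W_I}^{-1}\star\iota^L(\FT_W\star\scrG)$. It therefore suffices to prove the following: there is a natural isomorphism
\[\iota^L(\FT_W\star\scrG)\cong \FT_{W_I}\star\iota^R(\scrG),\]
and symmetrically on the right. The right-hand version I would deduce from the left one by applying $(-)^{\op}$. By Lemma \ref{lem:refl_commutes_with_sheaf_functors}, $(-)^{\op}$ commutes with $(i_{W_I})^*$ and $(i_{W_I})^!$ (note $W_I^{-1}=W_I$), and it is anti-monoidal. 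It also sends $\Delta_{w_0}$ to $\Delta_{w_0^{-1}}=\Delta_{w_0}$, so it fixes $\FT_W$ and $\FT_{W_I}$, and it reverses the order of the products.

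For the left version, I would write $\FT_W=\Delta_{w_0}\star\Delta_{w_0}$ and $w_0=w_I\cdot w^I$ with lengths adding. Proposition \ref{prop:arv_convolution} then gives
\[\Delta_{w_0}\cong \iota(\Delta_{w_I})\star\Delta_{w^I}\qquad\text{and}\qquad \Delta_{w_0}\cong\Delta_{(w^I)^{-1}}\star\iota(\Delta_{w_I}).\]
Pulling $\iota(\Delta_{w_I})$ out on the left via Corollary \ref{cor:par_linearity} reduces the problem to comparing $\iota^L(\Delta_{w_0}\star\scrG)$ with $\Delta_{w_I}\star\iota^R(\nabla_{w_0}\star\scrG)$, after substituting $\scrG\mapsto\nabla_{w_0}\star\scrG$. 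The cleaner form I would aim for is the key geometric statement
\[(i_{W_I})^*(\Delta_{w_0}\star-)\cong \Delta_{w_I}\wistar (i_{w_IW_I}\ldots)\]
More concretely, I expect that $\Delta_{w_0}\star-$ sends the closed stratum $W_I$ to the open coset stratum $w_0W_I$, and interchanges $*$- and $!$-restrictions up to twist. The model fact is $\Delta_{w_0}\star\nabla_x\cong\Delta_{w_0x}$-type behaviour: $\Delta_{w_0}\star\nabla_{x}\cong\Delta_{w_0x^{-1}\ldots}$ follows from Proposition \ref{prop:arv_convolution}, since $w_0=(w_0x^{-1})x$ with lengths adding, giving $\Delta_{w_0}\star\nabla_{x^{-1}}\cong\Delta_{w_0x^{-1}}$.

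The plan is thus to prove a natural isomorphism of functors $\BE(\fr h,W)\to\BE(\fr h,W_I)$:
\[\iota^R(\scrG)\cong \iota^L_{w_0W_I}\bigl(\Delta_{w_0}\star\scrG\bigr)\text{-twisted},\]
namely $(i_{W_I})^!\scrG\cong \Delta_{w_I}^{-1}\star(\text{translate of }(i_{w_0W_I})^*(\Delta_{w_0}\star\scrG))$. The second $\Delta_{w_0}$ then moves the coset $w_0W_I$ back to $W_I$, using that $w_0W_I$ is open and $W_I$ is closed. Naturality would come from defining the comparison map by adjunction, as a composite
\[(i_{W_I})^!\to(i_{W_I})^*\]
built from the counit and unit together with the $\Delta_{w_0}$-translation. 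To check that it is an isomorphism, I would use that both sides are triangulated and that $\BE$ is generated by the $\nabla_x(n)$ (Lemma 6.9 of \cite{ARV}). On generators both sides are computable: $\iota^R\nabla_x$ is $\nabla_x$ if $x\in W_I$ and $0$ otherwise, by adjunction with costandards, while the other side reduces via Proposition \ref{prop:arv_convolution} to $*$-restrictions of standards to an open stratum, which behave in the same way.

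The main obstacle is this middle step: producing a genuinely natural map whose restriction to generators is the expected one. In particular, one must control $(i_{W_I})^*\Delta_y$ for $y\notin W_I$, which need not vanish, unlike the $!$-side. This is where the asymmetry between the open and closed strata, and the double application of $\Delta_{w_0}$, must compensate. I would try first to reduce everything to the cosets via the left $W_I$-module structure and Proposition \ref{prop:pull_push_module_functors}, which makes all functors $\BE(\fr h,W_I)$-linear so that it suffices to check on $\Delta_e$.
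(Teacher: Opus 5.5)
\textbf{There is a genuine gap: the central step of the proof is never supplied.}

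Your opening reduction is right. By Corollary \ref{cor:par_linearity} it suffices to produce a natural isomorphism $\iota^L(\FT_W\star\scrG)\cong\FT_{W_I}\star\iota^R(\scrG)$, and getting the right-handed version by applying $(-)^{\op}$ is a legitimate shortcut. After that, however, the comparison map is never defined: the key displayed formulas are left incomplete.

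The generator check also does not work as stated. On a generator $\nabla_x$, the functor you must control is $\iota^L(\FT_W\star\nabla_x)\cong\iota^L(\Delta_{w_0}\star\Delta_{w_0x})$. Proposition \ref{prop:arv_convolution} gives no formula for $\Delta_{w_0}\star\Delta_{w_0x}$, since the lengths do not add unless $x=w_0$. For $x=e$ this object is literally $\iota^L(\FT_W)$. So ``both sides are computable on generators'' presupposes the two facts that make up the whole theorem:
\begin{enumerate}
\item $\iota^L(\FT_W\star\nabla_x)=0$ for $x\notin W_I$;
\item $\iota^L(\FT_W)\cong\FT_{W_I}$.
\end{enumerate}
Rerouting through the open coset $w_0W_I$ does not avoid them. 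The object $(i_{w_0W_I})^*(\Delta_{w_0}\star\scrG)$ is indeed easy on generators. But your step ``the second $\Delta_{w_0}$ moves the coset back'' is exactly where both facts are needed.

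The obstacle you name is also backwards. For $y\notin W_I$, $\Delta_y$ is a $!$-extension from the open complement $W\setminus W_I$, so recollement gives $(i_{W_I})^*\Delta_y=0$. It is $(i_{W_I})^!\Delta_y$ that can be nonzero.

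\textbf{How to supply the missing step.} You do not need an abstract map $(i_{W_I})^!\to(i_{W_I})^*$. Apply $\FT_W\star(-)$ to the recollement triangle $\iota\iota^R\scrG\to\scrG\to j_*j^*\scrG\to$, where $j$ is the inclusion of $W\setminus W_I$.
\begin{itemize}
\item The third term lies in $\langle\nabla_x : x\notin W_I\rangle$, and $\Delta_{w_0}\star\nabla_x\cong\Delta_{w_0x}$.
\item The set $Z=\{w_0x : x\notin W_I\}$ is closed. Hence the subcategory generated by these $\Delta_{w_0x}$ equals $(i_Z)_*\BE_Z(\fr{h},W)=\langle\nabla_{w_0x} : x\notin W_I\rangle$.
\item A second application gives $\Delta_{w_0}\star\nabla_{w_0x}\cong\Delta_x$ with $x\notin W_I$, which $\iota^L$ kills (Lemma \ref{lem:ft_on_kernels}).
\end{itemize}
Therefore $\iota^L(\FT_W\star\scrG)\cong\iota^L(\FT_W\star\iota\iota^R\scrG)\cong\iota^L(\FT_W)\star\iota^R(\scrG)$ by Corollary \ref{cor:par_linearity}, and naturality in $\scrG$ is automatic. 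This is where your ``$W_I$-linearity, check on $\Delta_e$'' idea genuinely applies, but only after recollement has moved you into the image of $\iota$.

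It remains to show $\iota^L(\FT_W)\cong\FT_{W_I}$, and this needs an actual morphism.
\begin{itemize}
\item Write $\FT_W\cong\Delta_{w_I}\star\Delta_{w_Iw_0}\star\Delta_{w_0}$. Map it to $\Delta_{w_I}\star\nabla_{w_Iw_0}\star\Delta_{w_0}\cong\iota(\FT_{W_I})$ via the canonical morphism $\Delta_{w_Iw_0}\to\nabla_{w_Iw_0}$.
\item The cone of that canonical morphism lies in the subcategory generated by the $\nabla_x$ with $x<w_Iw_0$. For such $x$ one has $\Delta_{w_I}\star\nabla_x\star\Delta_{w_0}\cong\Delta_{w_I}\star\Delta_{xw_0}$ with $xw_0>w_I$.
\item Use the parabolic factorization $xw_0=vu$, with $v\in W_I$ and $u\neq e$ minimal in $W_I\backslash W$. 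This places the object in $\langle\Delta_{v'u} : v'\in W_I\rangle\subseteq\ker\iota^L$ (Lemmas \ref{lem:support_tech_lemma} and \ref{lem:restr_of_rel_counit}).
\end{itemize}
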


        \begin{corollary}\label{cor:rel_full_twist_centrality}
                The relative full twist is canonically central with respect to objects in $\BE (\fr{h}, W_I)$. 
                I.e., there is a natural isomorphism
                \[\FT_{W, I} \star \iota (\scrF) \cong \iota (\scrF) \star \FT_{W, I}\]
                for $\scrF \in \BE (\fr{h}, W_I)$.
        \end{corollary}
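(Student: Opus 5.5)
The plan is to deduce the corollary from Theorem \ref{thm:rel_serre_duality} by passing to right adjoints and using that $\FT_{W,I}$ is $\star$-invertible.

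First, I check invertibility. By Proposition \ref{prop:arv_convolution} we have $\FT_W \star \FT_W^{-1} \cong \Delta_e \cong \FT_W^{-1} \star \FT_W$, and likewise in $\BE(\fr{h}, W_I)$ for $\FT_{W_I}$. Since $\iota$ is monoidal, $\iota(\FT_{W_I}^{-1})$ is invertible with inverse $\iota(\FT_{W_I})$. Hence $\FT_{W,I}$ is invertible, with inverse
\[\FT_{W,I}^{-1} \coloneq \FT_W^{-1} \star \iota(\FT_{W_I}).\]
It follows that the functors $\FT_{W,I} \star (-)$ and $(-) \star \FT_{W,I}$ are autoequivalences of $\BE(\fr{h}, W)$. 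Their quasi-inverses, which are both left and right adjoints, are $\FT_{W,I}^{-1} \star (-)$ and $(-) \star \FT_{W,I}^{-1}$.

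Next, I compute right adjoints of the two outer functors in Theorem \ref{thm:rel_serre_duality}. Since $\iota^L$ is left adjoint to $\iota$, the composite $\iota^L(\FT_{W,I} \star -)$ has right adjoint $\FT_{W,I}^{-1} \star \iota(-)$. Similarly, $\iota^L(- \star \FT_{W,I})$ has right adjoint $\iota(-) \star \FT_{W,I}^{-1}$. The theorem gives a natural isomorphism between the two left adjoints. By uniqueness of adjoints up to unique natural isomorphism, we therefore get a natural isomorphism
\[\FT_{W,I}^{-1} \star \iota(\scrF) \cong \iota(\scrF) \star \FT_{W,I}^{-1}, \qquad \scrF \in \BE(\fr{h}, W_I).\]

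Finally, I apply $\FT_{W,I} \star (-) \star \FT_{W,I}$ to both sides and cancel using the isomorphisms $\FT_{W,I} \star \FT_{W,I}^{-1} \cong \Delta_e \cong \FT_{W,I}^{-1} \star \FT_{W,I}$. This yields $\iota(\scrF) \star \FT_{W,I} \cong \FT_{W,I} \star \iota(\scrF)$, naturally in $\scrF$.

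The only point needing care is canonicity. Every isomorphism used is either the one from the theorem, a unique comparison of adjoints, or a unit or counit isomorphism for the invertible object. So the resulting isomorphism is canonical, and natural in $\scrF$. I do not expect a genuine obstacle: all the substance lies in Theorem \ref{thm:rel_serre_duality}.
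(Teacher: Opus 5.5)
Your proof is correct, but it takes a different route from the paper.

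\textbf{The paper's route.} The paper works inside the triangulated subcategory $\scrC=\langle \FT_{W,I}\star\Delta_x : x\in W_I\rangle$. Using Lemma~\ref{lem:ft_commutes} (for both $W$ and $W_I$), it shows that $\scrC$ is also generated by the objects $\Delta_x\star\FT_{W,I}$. Hence both $\FT_{W,I}\star\iota(-)$ and $\iota(-)\star\FT_{W,I}$ are equivalences $\BE(\fr{h},W_I)\to\scrC$. Theorem~\ref{thm:rel_serre_duality}, together with $\iota^R\iota\cong\id$, then exhibits $\iota^L|_{\scrC}$ as a common quasi-inverse, so the two functors are isomorphic.

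\textbf{Your route.} You take right adjoints of the two outer functors in Theorem~\ref{thm:rel_serre_duality} on all of $\BE(\fr{h},W)$, and then conjugate by the invertible object $\FT_{W,I}$.

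\textbf{Comparison.} Your argument is purely formal. It needs no generation statements, no comparison of essential images, and no Lemma~\ref{lem:ft_commutes}; in fact it shows afterwards that the two essential images agree. The price is that it uses the full strength of the Theorem: both natural isomorphisms on the whole of $\BE(\fr{h},W)$, because a right adjoint depends on the entire left adjoint. The paper's argument only evaluates the Theorem on objects $\iota(\scrF)$. There, by Corollary~\ref{cor:par_linearity}, the Theorem amounts to $\iota^L(\FT_{W,I})$ being the monoidal unit of $\BE(\fr{h},W_I)$, which follows from Lemma~\ref{lem:restr_of_rel_counit}. So the paper's argument effectively bypasses the kernel lemmas, at the cost of needing full twists to commute with standard objects.
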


        The above theorem and its corollary is a restatement of Conjecture \ref{conj:rel_serre_duality_intro} which has been broken up to improve the exposition.
        The proof of these statements will require some preparation.
        Our argument closely follows Ho and Li's argument \cite{HLnew}, or more precisely, an earlier unpublished pre-print \cite{HLold}.

        \begin{lemma}\label{lem:ft_on_kernels}
                The functors 
                \[ \FT_W \star (-): \BE (\fr{h}, W) \to \BE (\fr{h}, W) \qquad\text{and}\qquad (-) \star \FT_W: \BE (\fr{h}, W) \to \BE (\fr{h}, W)\] 
                restrict to equivalences of categories $\ker (\iota \iota^R) \to \ker (\iota \iota^L)$.
        \end{lemma}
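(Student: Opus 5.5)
Since $\FT_W$ is invertible with inverse $\FT_W^{-1}$, both functors $\FT_W \star (-)$ and $(-) \star \FT_W$ are autoequivalences of $\BE (\fr{h}, W)$. It therefore suffices to show that $\FT_W \star (-)$ maps $\ker(\iota\iota^R)$ into $\ker(\iota\iota^L)$, and that $\FT_W^{-1} \star (-)$ maps $\ker(\iota\iota^L)$ into $\ker(\iota\iota^R)$; the same then has to be done for right convolution. Since $\iota$ is fully faithful, $\ker(\iota\iota^R) = \ker(i_{W_I})^!$ and $\ker(\iota\iota^L) = \ker (i_{W_I})^*$. By the recollement for the closed set $W_I$ and its open complement $U = W \setminus W_I$, these kernels are the essential images of $(i_U)_*$ and $(i_U)_!$ respectively. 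By \cite[Lemma 6.9]{ARV}, $\BE_U (\fr{h}, W)$ is generated as a triangulated category by the $\nabla_w^U(n)$, and also by the $\Delta_w^U(n)$, for $w \in U$. Hence $\ker(\iota\iota^R)$ is the triangulated subcategory generated by the $\nabla_w(n)$ with $w \notin W_I$, and $\ker(\iota\iota^L)$ is the one generated by the $\Delta_w(n)$ with $w \notin W_I$. Since convolution is triangulated, it suffices to check the statement on these generators.

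The main computation is to show $\FT_W \star \nabla_w \cong \Delta_{w_0 w w_0}$ and $\nabla_w \star \FT_W \cong \Delta_{w_0 w w_0}$. Write $w_0 = w^{-1} \cdot (w w_0)$, where $\ell(w_0) = \ell(w^{-1}) + \ell(ww_0)$. Proposition \ref{prop:arv_convolution}(1) then gives $\Delta_{w_0} \cong \Delta_{w^{-1}} \star \Delta_{w w_0}$. Using Proposition \ref{prop:arv_convolution}(2), $\Delta_{w^{-1}} \star \nabla_w \cong \Delta_e$, and hence
\[\Delta_{w_0} \star \nabla_w \cong \Delta_{w_0}\star\Delta_{w^{-1}}^{-1}\ldots\]
More cleanly, write $w_0 = (w_0 w^{-1}) \cdot w$ with lengths adding, so that $\Delta_{w_0} \cong \Delta_{w_0w^{-1}} \star \Delta_w$. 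Then
\[\Delta_{w_0} \star \nabla_{w^{-1}} \cong \Delta_{w_0 w^{-1}} \star \Delta_w \star \nabla_{w^{-1}} \cong \Delta_{w_0 w^{-1}}.\]
Replacing $w$ by $w^{-1}$ gives $\Delta_{w_0} \star \nabla_w \cong \Delta_{w_0 w}$. Next, $\Delta_{w_0}\star\Delta_{w_0 w}$: write $w_0 = (w_0 w w_0)\cdot(w_0 w^{-1})$. Since $\ell(w_0ww_0) = \ell(w)$ and $\ell(w_0 w^{-1}) = \ell(w_0)-\ell(w)$, the lengths add, so $\Delta_{w_0} \cong \Delta_{w_0ww_0} \star \Delta_{w_0w^{-1}}$. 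Also $w_0 w^{-1}\cdot w_0 w \cdots$; it is simpler to write $\Delta_{w_0} \star \Delta_{w_0 w}$ and use $w_0 \cdot w_0 w$ with $\ell = \ell(w_0) + \ell(w_0w)$ failing. So instead I would compute $\FT_W \star \nabla_w = \Delta_{w_0}\star(\Delta_{w_0}\star\nabla_w) = \Delta_{w_0}\star\Delta_{w_0w}$ differently: use $\Delta_{w_0} \cong \Delta_{w_0 w w_0}\star\Delta_{w_0 w^{-1}}$ on the left factor? That leaves $\Delta_{w_0w^{-1}}\star\Delta_{w_0 w}$, and since $(w_0 w^{-1})(w_0 w)$ need not be length-additive, I would instead route everything through $\Delta_{w_0}\star\Delta_x\cong\Delta_{w_0xw_0}\star\Delta_{w_0}$ (from $w_0 x = (w_0xw_0) w_0$ type factorizations combined with part (2)). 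This gives $\FT_W \star \nabla_w \cong \Delta_{w_0ww_0}\star\Delta_{w_0}\star\Delta_{w_0}\star\nabla_w$, together with a similar conjugation trick for $\nabla$. This bookkeeping is the main obstacle, and I would settle it by proving the conjugation isomorphisms $\Delta_{w_0}\star\Delta_x \cong \Delta_{w_0xw_0}\star\Delta_{w_0}$ and $\Delta_{w_0}\star\nabla_x\cong\nabla_{w_0xw_0}\star\Delta_{w_0}$ first, via length-additive factorizations of $w_0$ and Proposition \ref{prop:arv_convolution}.

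Granting these conjugation isomorphisms, $\FT_W\star\nabla_w$ is identified with a standard object $\Delta_{w'}$ for some $w'$ in the $W_I$-double-coset-stable complement. Here one uses that $W_I$ is a subgroup, so $w\notin W_I$ is preserved by the relevant operations; the resulting object has the form $\Delta_{w_0 w w_0}$-type or $\Delta_{w}$-type with index outside $W_I$. Alternatively, to avoid pinning the index down exactly, I would compute $(i_{W_I})^*(\FT_W\star\nabla_w)$ directly: it vanishes as soon as the object is a $\Delta_{x}$ with $x\notin W_I$, because $(i_{\{y\}})^*\Delta_x=0$ for $y\neq x$. The inverse direction, $\FT_W^{-1}\star\Delta_w\cong\nabla_{?}$, follows by the same argument with the roles of $\Delta$ and $\nabla$ swapped, or from the Verdier duality $\DD\Delta_w=\nabla_w$ together with $\DD(\FT_W)\cong\FT_W^{-1}$. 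Right convolution follows by applying $(-)^{\op}$ and Lemma \ref{lem:refl_commutes_with_sheaf_functors}, since $W_I^{-1}=W_I$ and $\Delta_w^{\op}\cong\Delta_{w^{-1}}$.
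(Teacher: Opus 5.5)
Your description of the two kernels as $\langle \nabla_x : x\notin W_I\rangle_{(1),\Delta}$ and $\langle \Delta_x : x\notin W_I\rangle_{(1),\Delta}$ is correct. So is your computation $\Delta_{w_0}\star\nabla_w\cong\Delta_{w_0w}$, and so is the reduction of right convolution to left convolution via $(-)^{\op}$.

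\textbf{The gap.} It is in the ``main computation'': you claim that $\FT_W\star\nabla_w$ is isomorphic to a single (shifted) standard object, whether $\Delta_{w_0ww_0}$ or some other $\Delta_{w'}$. This is false. By your own first step, $\FT_W\star\nabla_w\cong\Delta_{w_0}\star\Delta_{w_0w}$, which is a convolution of standard objects whose lengths do not add.

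For a counterexample, take $W=S_3=\langle s,t\rangle$, $I=\emptyset$, $w=s$, so that $w_0s=st$ and $w_0sw_0=t$. Pass to the Hecke algebra via $[\Delta_x]\mapsto H_x$, $[\nabla_x]\mapsto H_{x^{-1}}^{-1}$, with $H_s^2=1+(v^{-1}-v)H_s$. The class of $\FT_W\star\nabla_s$ is
\[H_{w_0}^2H_s^{-1}=H_{w_0}H_sH_t=H_s+(v^{-1}-v)(H_{st}+H_{ts})+(v^{-1}-v)^2H_{w_0}.\]
This is not a unit multiple of any single $H_x$. In particular it is not $[\Delta_t]=H_t$, as your formula predicts.

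The conjugation isomorphisms you propose cannot repair this. At best they relate $\FT_W\star\nabla_w$ to other convolutions such as $\nabla_w\star\FT_W$, none of which is a single standard object. Your fallback (computing $(i_{W_I})^*$ ``as soon as the object is a $\Delta_x$'') and your treatment of the inverse direction rest on the same false identification. So, as written, nothing shows that $\FT_W\star\nabla_w$ lies in $\ker(\iota\iota^L)$.

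\textbf{The missing idea.} After the first step, stop computing generator by generator and instead change generators at the level of subcategories.
\begin{enumerate}
\item Since $\Delta_{w_0}\star(-)$ is an autoequivalence, your first step gives $\Delta_{w_0}\star\ker(\iota\iota^R)=\langle\Delta_y : y\in Z\rangle_{(1),\Delta}$, where $Z=\{w_0x : x\notin W_I\}$.
\item Left multiplication by $w_0$ reverses the Bruhat order and $W\setminus W_I$ is open, so $Z$ is closed. Hence $(i_Z)_!=(i_Z)_*$, and by \cite[Lemma 6.9]{ARV},
\[\langle\Delta_y : y\in Z\rangle_{(1),\Delta}=(i_Z)_*\BE_Z(\fr{h},W)=\langle\nabla_y : y\in Z\rangle_{(1),\Delta}.\]
The individual object $\Delta_{w_0w}$ is not a costandard object, but it lies in the subcategory generated by the costandard objects on $Z$, which is all you need.
\item Apply $\Delta_{w_0}\star(-)$ once more and use $\Delta_{w_0}\star\nabla_y\cong\Delta_{w_0y}$ again. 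This gives
\[\FT_W\star\ker(\iota\iota^R)=\langle\Delta_{w_0y} : y\in Z\rangle_{(1),\Delta}=\langle\Delta_x : x\notin W_I\rangle_{(1),\Delta}=\ker(\iota\iota^L).\]
\end{enumerate}
This is exactly the paper's argument. It yields an equality of subcategories directly, so no separate check of the inverse direction is needed.
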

        \begin{proof}
                Our proof for the lemma is derived from \cite[Lemma 3.3.2]{HLnew}.
                The argument for $(-) \star \FT_W$ is symmetric to the argument for $\FT_W \star (-)$ and is omitted.
                Since $\FT_W \star (-)$ is an autoequivalence of $\BE (\fr{h}, W)$ with inverse $\FT_W^{-1} \star (-)$, the content of the lemma is that $\FT_W \star \ker (\iota \iota^R) = \ker (\iota \iota^L)$.
                
                By recollement (cf., \cite[Lemma 6.9]{ARV}), 
                \[\ker (\iota \iota^R) = \langle \nabla_x : x \notin W_I\rangle_{(1), \Delta}\]
                where the right-hand side denotes the triangulated subcategory generated by $\nabla_x (n)$ for $x \notin W_I$ and $n \in \Z$.
                Likewise, 
                \[\ker (\iota \iota^L) = \langle \Delta_x : x \notin W_I\rangle_{(1), \Delta}\]
                By Proposition \ref{prop:arv_convolution}, there is an isomorphism $\Delta_{w_0} \star \nabla_x \cong \Delta_{w_0 x}$ for any $x \in W$.
                As a result,
                \begin{equation}\label{eq:ft_on_kernels}
                        \Delta_{w_0} \star \ker (\iota \iota^R) = \langle \Delta_{w_0 x} : x \notin W_I\rangle_{(1), \Delta}.
                \end{equation}
                Let $Z = \{ w_0 x : x \notin W_I \} \subseteq W$. Note that $Z$ is closed in $W$. Indeed, if $z \leq y$ with $y \in Z$, then $w_0 y \leq w_0 z$.
                Since $W_I$ is closed and $w_0 y \notin W_I$, we must have that $w_0 z \notin W_I$ as well. In other words, $z \in Z$.
                As a result, from (\ref{eq:ft_on_kernels}) and recollement, we see that
                \[ \Delta_{w_0} \star \ker (\iota \iota^R) = (i_Z)_* \BE_{Z} (\fr{h}, W) = \langle \nabla_y : y \in Z \rangle_{(1), \Delta}.\]
                We then have that
                \[\FT_{W} \star \ker (\iota \iota^R) = \langle \Delta_{w_0} \star \nabla_y : y \in Z \rangle_{(1), \Delta} = \langle \Delta_x : x \in W_I\rangle_{(1), \Delta} = \ker (\iota \iota^L)\]
                as desired.
        \end{proof}

        \begin{lemma}\label{lem:rel_ft_on_kernels}
                The functors 
                \[\FT_{W, I} \star (-) : \BE (\fr{h}, W) \to \BE (\fr{h}, W) \qquad\text{and}\qquad (-) \star \FT_{W, I} : \BE (\fr{h}, W) \to \BE (\fr{h}, W)\]
                restrict to equivalences of categories $\ker (\iota \iota^R) \to \ker (\iota \iota^L)$.
        \end{lemma}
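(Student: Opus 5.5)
Since $\FT_{W,I} = \iota(\FT_{W_I}^{-1}) \star \FT_W$, Lemma \ref{lem:ft_on_kernels} reduces the statement to one claim: left (and right) convolution with $\iota(\FT_{W_I}^{-1})$ preserves $\ker(\iota\iota^L)$, and likewise with $\iota(\FT_{W_I})$. Indeed, $\FT_W \star (-)$ already carries $\ker(\iota\iota^R)$ equivalently onto $\ker(\iota\iota^L)$. If $\iota(\FT_{W_I}^{-1}) \star (-)$ restricts to an autoequivalence of $\ker(\iota\iota^L)$, then the composite $\FT_{W,I} \star (-)$ restricts to an equivalence $\ker(\iota\iota^R) \to \ker(\iota\iota^L)$. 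Its quasi-inverse is $\FT_W^{-1} \star \iota(\FT_{W_I}) \star (-)$, using $\iota(\FT_{W_I}) \star \iota(\FT_{W_I}^{-1}) \cong \iota(\Delta_e) \cong \Delta_e$ by monoidality of $\iota$ and Proposition \ref{prop:arv_convolution}.

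For the key claim I will use Corollary \ref{cor:par_linearity}. Let $\scrG \in \ker(\iota\iota^L)$. Since $\iota$ is fully faithful, this is equivalent to $\iota^L(\scrG) = 0$. For any $\scrF \in \BE(\fr{h}, W_I)$ the corollary gives
\[ \iota^L(\iota(\scrF) \star \scrG) \cong \scrF \star \iota^L(\scrG) = 0, \]
so $\iota(\scrF) \star \scrG \in \ker(\iota\iota^L)$. Taking $\scrF = \FT_{W_I}^{\pm 1}$ shows that $\iota(\FT_{W_I}^{\pm1}) \star (-)$ preserves $\ker(\iota\iota^L)$. The two functors are mutually quasi-inverse, so each restricts to an autoequivalence of this kernel.

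The right-hand version is symmetric. We have $(-) \star \FT_{W,I} = ((-) \star \iota(\FT_{W_I}^{-1})) \star \FT_W$, so the order of operations is reversed. Here I will use Corollary \ref{cor:par_linearity} in the form $\iota^R(\scrG \star \iota(\scrF)) \cong \iota^R(\scrG) \star \scrF$. This shows that $(-) \star \iota(\FT_{W_I}^{\pm1})$ preserves $\ker(\iota\iota^R)$, and hence is an autoequivalence of it. Composing with the right-hand part of Lemma \ref{lem:ft_on_kernels}, $(-) \star \FT_W \colon \ker(\iota\iota^R) \xrightarrow{\sim} \ker(\iota\iota^L)$, finishes the argument.

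No step is a serious obstacle. The only point needing care is bookkeeping: one must apply the linearity isomorphism on the correct side, to $\iota^L$ or $\iota^R$ as appropriate, depending on whether $\iota(\FT_{W_I}^{-1})$ acts before or after $\FT_W$.
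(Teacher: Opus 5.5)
Your proposal is correct and matches the paper's proof. You reduce via Lemma \ref{lem:ft_on_kernels} to stability of $\ker(\iota\iota^L)$ under $\iota(\FT_{W_I}^{\pm 1})\star(-)$, and get that stability from Corollary \ref{cor:par_linearity} applied with $\FT_{W_I}^{-1}$ and $\FT_{W_I}$. For the right-hand version, which the paper omits as ``almost identical'', your bookkeeping fills it in correctly: first use $\iota^R$-linearity to see that $(-)\star\iota(\FT_{W_I}^{\pm1})$ preserves $\ker(\iota\iota^R)$, then apply $(-)\star\FT_W$.
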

        \begin{proof}
                Our proof for the lemma is derived from \cite[Proposition 3.1.1]{HLold}.
                 The argument for $(-) \star \FT_{W, I}$ is almost identical to the argument for $\FT_{W, I} \star (-)$ and is omitted.
                By Lemma \ref{lem:ft_on_kernels}, it suffices to prove that $\iota (\FT_{W_I}^{-1}) \star \ker (\iota \iota^L) = \ker (\iota \iota^L)$.
                For all $\scrF \in \BE (\fr{h}, W_I)$ and $\scrG \in \BE (\fr{h}, W)$, we have that 
                \begin{equation}\label{eq:rel_ft_on_kernels}
                        \iota (\scrF) \star \iota \iota^L (\scrG) \cong \iota (\scrF \star \iota^L \scrG) \cong \iota \iota^L (\iota (\scrF) \star \scrG).
                \end{equation}
                Here the first isomorphism follows from $\iota$ being monoidal and the second isomorphism follows from Corollary \ref{cor:par_linearity}.
                When $\scrF = \FT_{W_I}^{-1}$, equation (\ref{eq:rel_ft_on_kernels}) implies that $\iota (\FT_{W_I}^{-1}) \star \ker (\iota \iota^L) \subseteq \ker (\iota \iota^L)$.
                Likewise for $\scrF = \FT_{W_I}$, equation (\ref{eq:rel_ft_on_kernels}) implies that $\iota (\FT_{W_I}^{-1}) \star \ker (\iota \iota^L) \supseteq \ker (\iota \iota^L)$.
        \end{proof}

        \begin{lemma}\label{lem:support_tech_lemma}
                Let $\scrF \in \BE (\fr{h}, W)$ such that $\supp (\scrF) \subseteq Z \coloneq \{x \in W \mid x < w_I w_0\}$. 
                Then $\iota^L (\Delta_{w_I} \star \scrF \star \Delta_{w_0} ) = 0$. 
        \end{lemma}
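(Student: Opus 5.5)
The plan is to prove the vanishing by adjunction. Since $\iota^L$ is left adjoint to $\iota$, an object $\scrG$ satisfies $\iota^L(\scrG) = 0$ if and only if $\Hom(\scrG, \iota(\scrH)(n)[m]) = 0$ for all $\scrH \in \BE(\fr{h}, W_I)$ and all $n, m$. The idea is to move the standard objects $\Delta_{w_I}$ and $\Delta_{w_0}$ across to the other side of the Hom using their convolution-inverses, and then to appeal to the orthogonality of standard and costandard objects.

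\emph{Step 1: reduce to standard objects.} Since $Z$ is closed and $\supp(\scrF) \subseteq Z$, we have $\scrF \cong (i_Z)_!(i_Z)^*\scrF$, so $\scrF$ lies in $(i_Z)_!\BE_Z(\fr{h}, W)$. By \cite[Lemma 6.9]{ARV}, $\BE_Z(\fr{h}, W)$ is generated as a triangulated category by the objects $\Delta^Z_x(n)$ with $x \in Z$. Moreover $(i_Z)_!\Delta^Z_x = \Delta_x$ by compatibility of pushforwards with composition. The functor $\iota^L(\Delta_{w_I} \star (-) \star \Delta_{w_0})$ is triangulated, so it suffices to show that $\iota^L(\Delta_{w_I} \star \Delta_x \star \Delta_{w_0}) = 0$ for every $x < w_I w_0$.

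\emph{Step 2: move the twists across the Hom.} By Proposition \ref{prop:arv_convolution}(2), convolution with $\Delta_{w}$ is an autoequivalence with inverse given by convolution with $\nabla_{w^{-1}}$, on either side; note that $w_I^{-1} = w_I$ and $w_0^{-1} = w_0$. Hence
\[\Hom(\Delta_{w_I} \star \Delta_x \star \Delta_{w_0}, \iota(\scrH)) \cong \Hom(\Delta_x, \nabla_{w_I} \star \iota(\scrH) \star \nabla_{w_0}) \cong \Hom(\Delta_x, \iota(\nabla_{w_I} \star \scrH) \star \nabla_{w_0}),\]
where the last step uses that $\iota$ is monoidal and that $\iota(\nabla_{w_I}) = \nabla_{w_I}$ (pushforward of a costandard object). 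As $\scrH$ ranges over $\BE(\fr{h}, W_I)$, so does $\nabla_{w_I} \star \scrH$. So it suffices to show that $\Hom(\Delta_x, \iota(\scrH') \star \nabla_{w_0}(n)[m]) = 0$ for all $\scrH'$.

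\emph{Step 3: identify generators and conclude.} The essential image of $\iota$ is generated by the objects $\iota(\Delta_y) = \Delta_y$ with $y \in W_I$, since $W_I$ is closed and so $(i_{W_I})_* = (i_{W_I})_!$. For $y \in W_I$ we have $\ell(y^{-1}) + \ell(yw_0) = \ell(w_0)$, so Proposition \ref{prop:arv_convolution}(1) gives $\nabla_{w_0} \cong \nabla_{y^{-1}} \star \nabla_{yw_0}$. Then part (2) gives
\[\Delta_y \star \nabla_{w_0} \cong \nabla_{yw_0}.\]
It therefore remains to show that $\Hom(\Delta_x, \nabla_{yw_0}(n)[m]) = 0$. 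This holds whenever $x \neq yw_0$. Indeed, by the adjunction $(i_{\{x\}})_! \dashv (i_{\{x\}})^!$ this Hom-space equals $\Hom(b_x, (i_{\{x\}})^!(i_{\{yw_0\}})_* b_{yw_0}(n)[m])$, and the base change $(i_{\{x\}})^!(i_{\{z\}})_* = 0$ for $x \neq z$ follows from the recollement formalism of \cite{ARV}.

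Finally, left multiplication by $w_0$ reverses the Bruhat order. Since $y \le w_I$ for every $y \in W_I$, we get $yw_0 \geq w_Iw_0$. As $x < w_Iw_0$, we conclude $x \neq yw_0$, which finishes the argument.

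The main point needing care is the orthogonality $\Hom(\Delta_x, \nabla_z[m](n)) = 0$ for $x \neq z$, together with the claim that $\iota(\BE(\fr{h}, W_I))$ is generated by the $\Delta_y$. I expect both to follow directly from \cite[\S5--6]{ARV}, in the form of base change for $\{x\}$ and $\{z\}$ disjoint, and the fact that $(i_{W_I})_*$ sends standard objects to standard objects.
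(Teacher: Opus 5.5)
Your proof is correct, but it takes a different route from the paper's.

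\textbf{The paper's route.} The paper generates $\BE_Z(\fr{h},W)$ by \emph{costandard} objects, so that $\Delta_{w_I}\star\nabla_x\star\Delta_{w_0}\cong\Delta_{w_I}\star\Delta_{xw_0}$ with $xw_0>w_I$. It then factors $xw_0=vu$ with $v\in W_I$ and $u\neq e$ a minimal length representative of a coset in $W_I\backslash W$. It absorbs $\Delta_{w_I}\star\Delta_v$ into the essential image of $\iota$. Finally it uses length additivity, $\Delta_z\star\Delta_u\cong\Delta_{zu}$ with $zu\notin W_I$ for all $z\in W_I$, to place the object inside $\langle\Delta_w : w\notin W_I\rangle_{(1),\Delta}$, which $\iota^L$ kills.

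\textbf{Your route.} You work with \emph{standard} generators and characterize $\ker\iota^L$ as the left orthogonal of the image of $\iota$. Moving the twists across the Hom turns $\Delta_{w_I}$ into $\nabla_{w_I}$, which is swallowed by $\iota(\BE(\fr{h},W_I))$. This is the same mechanism as the paper's absorption of $\Delta_{w_I}\star\Delta_v$. In place of the parabolic coset factorization, you use the single identity $\Delta_y\star\nabla_{w_0}\cong\nabla_{yw_0}$ for $y\in W_I$, together with $\Delta$--$\nabla$ orthogonality and the order reversal by $w_0$.

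Note that the choice of generators is forced in each argument. With $\nabla_x$-generators you would land in $\Hom(\nabla_x,\nabla_{yw_0}(n)[m])$, which need not vanish. Conversely, the paper needs $\nabla_x$ so that $\nabla_x\star\Delta_{w_0}$ collapses to a single standard object.

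\textbf{What each approach buys.} Your approach avoids minimal coset representatives and the length additivity $\ell(zu)=\ell(z)+\ell(u)$ for $z\in W_I$. The price is the extra input $\Hom(\Delta_x,\nabla_z(n)[m])=0$ for $x\neq z$. You correctly indicate how this follows from recollement ($i^!j_*=0$) and compatibility of the sheaf functors with composition. The paper's approach yields the slightly more concrete statement that $\Delta_{w_I}\star\scrF\star\Delta_{w_0}$ lies in $\langle\Delta_w : w\notin W_I\rangle_{(1),\Delta}$.

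\textbf{A cosmetic slip.} The order reversal you actually use is for right multiplication $y\mapsto yw_0$, not left multiplication. Both maps reverse the Bruhat order, so the argument is unaffected.
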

        \begin{proof}
                By \cite[Lemma 6.9]{ARV}, the category $\BE_Z (\fr{h}, W)$ is generated by $\nabla_x^Z (n)$ for $x < w_I w_0$ and $n \in \Z$.
                Moreover, since $Z$ is closed, we have that $(i_Z)_* \nabla_x^Z \cong \nabla_x$ for all $x \in Z$.
                By Proposition \ref{prop:arv_convolution}, there is an isomorphism $\Delta_{w_I} \star  \nabla_x \star \Delta_{w_0} \cong \Delta_{w_I} \star \Delta_{x w_0}$.
                We see that 
                \[\Delta_{w_I} \star (i_Z)_* \BE_Z (\fr{h}, W) \star \Delta_{w_0} = \langle \Delta_{w_I} \star \Delta_{y} : y > w_I \rangle_{(1), \Delta}.\]
                Let $y \in W$ with $y > w_I$. There is a unique decomposition $y = v_y u_y$ where $v_y \in W_I$ and $u_y \in W$ a minimal length representative for one of the cosets in $W_I \backslash W$.
                The condition that $y > w_I$ implies that $u_y \neq e$. Moreover, $\ell (y) = \ell (v_y) + \ell (u_y)$.
                Since $\Delta_{w_I} \star \Delta_{v_y}$ is supported on $W_I$, we have that
                \[\langle \Delta_{w_I} \star \Delta_{y} : y > w_I \rangle_{(1), \Delta} = \langle \Delta_{w_I} \star \Delta_{v_y} \star \Delta_{u_y} : y > w_I \rangle_{(1), \Delta} \subseteq \langle \Delta_x \star \Delta_{u_y} : x \in W_I, y > w_I\rangle_{(1), \Delta}.\]
                Our condition on $u_y$ ensures that $\ell (xu_y) = \ell (x) + \ell (u_y)$ for all $x \in W_I$. 
                We then again have by Proposition \ref{prop:arv_convolution} that $\Delta_x \star \Delta_{u_y} \cong \Delta_{xu_y}$ for all $x \in W_I$.
                Additionally, since $u_y \neq e$, $xu_y \notin W_I$ for any $x \in W_I$. In other words, $\iota^L (\Delta_{xu_y}) = 0$.
                Since this holds for all $x \in W_I$, we can conclude that $\iota^L (\Delta_{w_I} \star \scrF \star \Delta_{w_0} ) = 0$ for all $\scrF \in \BE (\fr{h}, W)$ with $\supp (\scrF) \subseteq Z$. 
        \end{proof}

        The following lemma appeared as \cite[Proposition 3.1.2]{HLold}, however its proof is subtly flawed. 
        This was corrected in \cite{HLnew} where a version of the proposition appears with $I = \emptyset$ (see \cite[Proposition 3.1.2]{HLnew}).
        We give an alternate argument for the original proposition with a general $I \subseteq S$. 
        This allows us to avoid reducing to the $I = \emptyset$ case in proving Theorem \ref{thm:rel_serre_duality}.
        The reduction technique in \cite{HLnew} does not adapt well to our setting since it makes crucial use of the adjoint functor theorem for presentable $\infty$-categories.

        \begin{lemma}\label{lem:restr_of_rel_counit}
                There exists a morphism $\alpha : \FT_W \to \iota (\FT_{W_I})$ such that $\iota^L (\alpha)$ is an isomorphism.
        \end{lemma}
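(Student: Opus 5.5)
Write $w_0 = w_I u$ with $u$ the minimal length representative of the coset $W_I w_0$, so $\ell(w_0) = \ell(w_I)+\ell(u)$. By Proposition \ref{prop:arv_convolution}, $\Delta_{w_0} \cong \Delta_{w_I} \star \Delta_u$, hence
\[\FT_W \cong \Delta_{w_I} \star \Delta_u \star \Delta_{w_0}.\]
Similarly $\iota(\FT_{W_I}) \cong \Delta_{w_I} \star \Delta_{w_I}$. Write $w_0 = u' w_I$ with $u' = w_I u w_I$ the minimal representative on the other side, so $\Delta_{w_0} \cong \Delta_{u'}\star\Delta_{w_I}$. Using the identity $u w_0 = w_I$, it is natural to compare $\Delta_u \star \Delta_{w_0}$ with $\Delta_{w_I}$.

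The plan is to construct a morphism
\[\beta : \Delta_u \star \Delta_{w_0} \to \Delta_{w_I}\]
whose cone has support in $Z = \{x < w_I w_0\}$. Then we set $\alpha \coloneq \id_{\Delta_{w_I}} \star \beta$, and Lemma \ref{lem:support_tech_lemma}, applied to the cone, shows that $\iota^L(\alpha)$ is an isomorphism. Note that $w_I w_0 = u$, so $Z = \{x < u\}$.

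Equivalently, it suffices to produce a morphism $\gamma : \Delta_u \to \Delta_{w_I}\star \nabla_{w_0}$ with cone supported in $Z$, and then set $\beta = \gamma \star \id_{\Delta_{w_0}}$, using $\nabla_{w_0}\star\Delta_{w_0}\cong\Delta_e$. Indeed, the cone of $\beta$ is then the cone of $\gamma$ convolved with $\Delta_{w_0}$, which is exactly the form required by Lemma \ref{lem:support_tech_lemma}: it has the shape $\Delta_{w_I}\star\scrF\star\Delta_{w_0}$ with $\supp\scrF\subseteq Z$. Now $\nabla_{w_0} \cong \nabla_{w_I}\star\nabla_{u}$, so
\[\Delta_{w_I}\star\nabla_{w_0}\cong \Delta_{w_I}\star\nabla_{w_I}\star\nabla_u.\]
This does not simplify directly, since $\nabla_{w_I}$ is the inverse of $\Delta_{w_I}$ only in the other order. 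So we rather use $\nabla_{w_0} = \nabla_{u'}\star\nabla_{w_I}$ together with $\Delta_{w_I}\star\nabla_{w_I^{-1}} = \Delta_e$; since $w_I^{-1} = w_I$, this gives $\Delta_{w_I}\star\nabla_{w_0} \cong \Delta_{w_I}\star\nabla_{u'}\star\nabla_{w_I}$. This is an object supported on $\overline{\{w_I u' w_I\}} = \overline{\{u\}}$.

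The key claim is therefore that $\scrG \coloneq \Delta_{w_I}\star\nabla_{w_0}$ is supported on $\overline{\{u\}} = \{x\le u\}$, with $(i_{\{u\}})^*\scrG \cong b_u$, possibly up to a shift and twist that we check against the conjecture's normalization. Granting this, recollement for the open stratum $\{u\}$ in the closed set $\overline{\{u\}}$ gives the adjunction unit
\[\scrG \to (i_{\{u\}})_*(i_{\{u\}})^*\scrG \cong \nabla_u,\]
whose cone lies in $\BE_Z$. It also gives the counit $\Delta_u \cong (i_{\{u\}})_!(i_{\{u\}})^!\scrG \to \scrG$, again with cone in $\BE_Z$, which is the $\gamma$ we need. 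The stalk computation should follow by checking $(i_{\{u\}})^!$ rather than $(i_{\{u\}})^*$: by adjunction, $\Hom(\Delta_u(n),\scrG[m])$ reduces to a computation in $\BE_{\{u\}}$, and this can be done by writing $\Delta_{w_I}\star\nabla_{u'}\star\nabla_{w_I}$ as a convolution and tracking the $u$-stratum, which appears with multiplicity one by a length count.

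The main obstacle is this support and stalk computation for $\Delta_{w_I}\star\nabla_{u'}\star\nabla_{w_I}$, i.e.\ a mixed product of standards and costandards. Its support bound should follow from $\supp(\Delta_x\star\scrF)\subseteq\{y\le x z : z\in\supp\scrF\}$-type arguments, namely the Demazure product bound. The multiplicity-one $!$-stalk at the top element $u$ is the delicate part. I would first try to prove it by induction on $\ell(w_I)$, peeling off a simple reflection $s\in I$ and using the standard triangles relating $\Delta_s$, $\nabla_s$, and $\Delta_e$.
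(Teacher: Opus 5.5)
Your reduction is sound, and once completed it gives exactly the paper's morphism. As written, though, the proof has a gap. The step you call the main obstacle is never proved: that $\scrG=\Delta_{w_I}\star\nabla_{w_0}$ is supported on $\{x\le u\}$ with $(i_{\{u\}})^!\scrG\cong b_u$. The route you sketch for it would also not work. A Demazure-product support bound applied to $\Delta_{w_I}\star\nabla_{u'}\star\nabla_{w_I}$ only gives support in the closure of the Demazure product $w_I * u' * w_I = w_0$, i.e.\ all of $W$. A bound by the ordinary product $w_Iu'w_I$ is false in general; for example, $\nabla_s\star\nabla_s$ is not supported on $\{e\}$. Support bounds cannot see the cancellation that makes your claim true, so an induction on $\ell(w_I)$ along those lines would have to rediscover it.

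The missing point is the simplification you explicitly rejected. Proposition \ref{prop:arv_convolution}(2) gives $\Delta_x\star\nabla_{x^{-1}}\cong\Delta_e\cong\nabla_{x^{-1}}\star\Delta_x$ in both orders. Since $w_I^{-1}=w_I$, this yields
\[\Delta_{w_I}\star\nabla_{w_0}\cong\Delta_{w_I}\star\nabla_{w_I}\star\nabla_u\cong\nabla_u,\]
with no shift or twist. Your $\gamma$ is then simply the canonical map $\Delta_u\to\nabla_u$, whose cone $C$ is supported on $\{x<u\}$. The morphism $\alpha$ is $\id_{\Delta_{w_I}}\star\gamma\star\id_{\Delta_{w_0}}$, composed with the isomorphisms $\FT_W\cong\Delta_{w_I}\star\Delta_u\star\Delta_{w_0}$ and $\Delta_{w_I}\star\nabla_u\star\Delta_{w_0}\cong\Delta_{w_I}\star\Delta_{w_I}$. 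Its cone is $\Delta_{w_I}\star C\star\Delta_{w_0}$, so Lemma \ref{lem:support_tech_lemma} applies directly. This is precisely the paper's proof, with $u=w_Iw_0$. The detour through $u'$, the recollement unit and counit, and the stalk computation can all be dropped.
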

        \begin{proof}
                We define $\alpha$ by the composition
                \[\alpha : \FT_W = \Delta_{w_0} \star \Delta_{w_0} \cong \Delta_{w_I} \star \Delta_{w_I w_0} \star \Delta_{w_0} \stackrel{\beta}{\to} \Delta_{w_I} \star \nabla_{w_I w_0} \star \Delta_{w_0} \cong \Delta_{w_I} \star \Delta_{w_I} = \iota (\FT_{W_I})\]
                where the map $\beta$ is induced from the canonical morphism $\Delta_{w_I w_0} \to \nabla_{w_I w_0}$.
                Note that $\iota^L (\alpha)$ is an isomorphism if and only if $\iota^L (\Delta_{w_I} \star C \star \Delta_{w_0}) = 0$ where $C$ is the cone of $\beta$.
                Observe that the support of $C$ is contained in $\{x \in W \mid x < w_I w_0\}$. We are then done by Lemma \ref{lem:support_tech_lemma}.
        \end{proof}

        \begin{midsecproof}{Theorem \ref{thm:rel_serre_duality}}
                We just prove the isomorphism $\iota^L (\FT_{W, I} \star -) \cong  \iota^R$. The other isomorphism is similar.
                Let $\scrF \in \BE (\fr{h}, W)$. 
                By recollement, there is a distinguished triangle
                \[(i_{W_I})_* (i_{W_I})^! \scrF \to \scrF \to (i_{ W \setminus W_I})_* (i_{W \setminus W_I})^* \scrF \to \]
                which is equivalent to 
                \[\iota \iota^R \scrF \to \scrF \to (i_{ W \setminus W_I})_* (i_{W \setminus W_I})^* \scrF \to.\]
                We can apply $\FT_{W, I} \star (-)$ to the above triangle to obtain
                \begin{equation}\label{eq:rel_serre_duality_1}
                        \FT_{W, I} \star \iota \iota^R \scrF \to \FT_{W, I} \star \scrF \to \FT_{W, I} \star (i_{ W \setminus W_I})_* (i_{W \setminus W_I})^* \scrF \to.
                \end{equation}
                Observe that $(i_{ W \setminus W_I})_* (i_{W \setminus W_I})^* \scrF \in \ker \iota^R$, and hence by Lemma \ref{lem:rel_ft_on_kernels}, $\FT_{W, I} \star (i_{ W \setminus W_I})_* (i_{W \setminus W_I})^* \scrF  \in \ker \iota^L$.
                As a result, applying $\iota^L$ to (\ref{eq:rel_serre_duality_1}) yields an isomorphism
                \begin{equation}\label{eq:rel_serre_duality_2}
                        \iota^L (\FT_{W, I} \star \iota \iota^R \scrF) \cong \iota^L (\FT_{W, I} \star \scrF).
                \end{equation}
                On the other hand, we can use Corollary \ref{cor:par_linearity} to produce isomorphisms
                \begin{equation}\label{eq:rel_serre_duality_3}
                        \iota^L (\FT_{W, I} \star \iota \iota^R \scrF) \cong \iota^L (\FT_{W, I}) \star \iota^R (\scrF) \cong \iota^L (\iota (\FT_{W_I}^{-1}) \star \FT_W)  \star \iota^R (\scrF) \cong \FT_{W_I}^{-1} \star \iota^L (\FT_W) \star \iota^R (\scrF).
                \end{equation}
                By Lemma \ref{lem:restr_of_rel_counit}, there is also an isomorphism
                \begin{equation}\label{eq:rel_serre_duality_4}
                 \FT_{W_I}^{-1} \star \iota^L (\FT_W) \star \iota^R (\scrF) \stackrel{\sim}{\to} \FT_{W_I}^{-1} \star \FT_{W_I} \star \iota^R (\scrF) \cong \iota^R (\scrF)
                \end{equation}
                induced by $\alpha : \FT_W \to \iota (\FT_{W_I})$.
                By combining the isomorphisms in (\ref{eq:rel_serre_duality_2}), (\ref{eq:rel_serre_duality_3}), and (\ref{eq:rel_serre_duality_4}), we deduce the desired isomorphism
                \[\iota^L (\FT_{W, I} \star \scrF) \stackrel{\sim}{\to} \iota^R (\scrF).\]
                It is easy to check that all the morphisms used in the construction of this isomorphism are natural in $\scrF$. 
        \end{midsecproof}

        \begin{lemma}\label{lem:ft_commutes}
                Let $x \in W$. Then there is an isomorphism $\Delta_x \star \FT_W \cong \FT_W \star \Delta_x$.
        \end{lemma}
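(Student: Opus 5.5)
The plan is to use the standard conjugation identity for the longest element: for every $x \in W$,
\[\Delta_{w_0} \star \Delta_x \cong \Delta_{w_0 x w_0} \star \Delta_{w_0}.\]
Applying it twice gives
\[\FT_W \star \Delta_x = \Delta_{w_0} \star \Delta_{w_0} \star \Delta_x \cong \Delta_{w_0} \star \Delta_{w_0 x w_0} \star \Delta_{w_0} \cong \Delta_{x} \star \Delta_{w_0} \star \Delta_{w_0} = \Delta_x \star \FT_W,\]
since $w_0(w_0 x w_0)w_0 = x$ because $w_0^2 = e$. Everything reduces to proving this identity from Proposition \ref{prop:arv_convolution}.

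To prove the identity, set $y = x^{-1} w_0$. Then $\ell(y) = \ell(w_0) - \ell(x)$, so $\ell(x y) = \ell(w_0) = \ell(x) + \ell(y)$. Proposition \ref{prop:arv_convolution}(1) therefore gives
\[\Delta_{w_0} \cong \Delta_x \star \Delta_{y}.\]
Similarly, $w_0 = y \cdot (w_0 x w_0)$ and $\ell(w_0 x w_0) = \ell(x)$, because conjugation by $w_0$ preserves $S$ and hence length. It follows that $\ell(y) + \ell(w_0 x w_0) = \ell(w_0)$, and Proposition \ref{prop:arv_convolution}(1) gives
\[\Delta_{w_0} \cong \Delta_{y} \star \Delta_{w_0 x w_0}.\]
Combining the two decompositions,
\[\Delta_x \star \Delta_{w_0} \cong \Delta_x \star \Delta_y \star \Delta_{w_0 x w_0} \cong \Delta_{w_0} \star \Delta_{w_0 x w_0}.\]
Replacing $x$ by $w_0 x w_0$ yields $\Delta_{w_0} \star \Delta_x \cong \Delta_{w_0xw_0}\star\Delta_{w_0}$, which is the identity claimed above.

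The only inputs are Coxeter-theoretic facts: $\ell(x^{-1}w_0) = \ell(w_0) - \ell(x)$, and conjugation by $w_0$ is a length-preserving automorphism fixing $S$. Both are standard for finite Coxeter groups. Beyond these, the argument uses only Proposition \ref{prop:arv_convolution}(1) and associativity of $\star$, so I expect no real obstacle. The one point needing care is that the statement asks only for some isomorphism, not a natural or canonical one. If naturality were required, one would need to check compatibility of the isomorphisms of Proposition \ref{prop:arv_convolution}, but that is not needed here.
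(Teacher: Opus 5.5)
Your proof is correct, but it takes a different route from the paper. The paper first uses Proposition \ref{prop:arv_convolution}(1) to reduce to $x = s \in S$. It then uses part (2), the relations $\Delta_x \star \nabla_{x^{-1}} \cong \Delta_e$, to compute the conjugate $\Delta_{w_0} \star \Delta_s \star \nabla_{w_0} \cong \Delta_{w_0 s w_0}$. Applying this twice gives $\FT_W \star \Delta_s \star \FT_W^{-1} \cong \Delta_s$, and invertibility of $\FT_W$ finishes.

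You instead factor $w_0$ in two length-additive ways, $w_0 = x \cdot (x^{-1}w_0) = (x^{-1}w_0) \cdot (w_0 x w_0)$. This yields $\Delta_x \star \Delta_{w_0} \cong \Delta_{w_0} \star \Delta_{w_0 x w_0}$ directly for every $x$. It is the categorical shadow of the braid relation $T_x T_{w_0} = T_{w_0} T_{w_0 x w_0}$.

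Your argument is more economical. It needs only part (1) of the proposition and associativity: no costandard objects, no reduction to simple reflections, and no use of $\FT_W^{-1}$. The paper's version gains only a presentation that matches the ``invertible central element'' viewpoint (cf.\ the remark on Rouquier's braid group action), at the cost of invoking part (2) as an extra input. Either suffices for Corollary \ref{cor:rel_full_twist_centrality}, which only uses the existence of these isomorphisms to identify the subcategory $\scrC$.
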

        \begin{proof}
                By Proposition \ref{prop:arv_convolution}, it suffices to take $x = s \in S$. We can then conjugate $\Delta_s$ by $\Delta_{w_0}$ to get an isomorphism
                \[\Delta_{w_0} \star \Delta_s \star \nabla_{w_0} \cong \Delta_{w_0} \star \nabla_{sw_0} \cong \Delta_{w_0 s w_0} \star \Delta_{w_0 s} \star \nabla_{s w_0} \cong \Delta_{w_0 s w_0}. \]
                Likewise, we can conjugate $\Delta_{w_0 s w_0}$ by $\Delta_{w_0}$ to get an isomorphism
                \[\Delta_{w_0} \star \Delta_{w_0 s w_0} \star \nabla_{w_0} \cong \Delta_{w_0} \star \Delta_{w_0 s w_0} \star \nabla_{w_0 s w_0} \star \nabla_{w_0 s} \cong \Delta_{w_0} \star \nabla_{w_0 s} \cong \Delta_s.\]
                Combining these isomorphisms, we obtain $\FT_W \star \Delta_s \star \FT_W^{-1} \cong \Delta_s$ as desired. 
        \end{proof}

        \begin{remark}
                Lemma \ref{lem:ft_commutes} can also be deduced from Rouquier's action of the braid group on $\BE (\fr{h}, W)$ constructed in \cite{Rou}.
                The lemma then follows from the observation that the full twist in the braid group is a central element.
        \end{remark}

        \begin{midsecproof}{Corollary \ref{cor:rel_full_twist_centrality}}
                 Consider the full triangulated subcategory $\scrC = \langle \FT_{W, I} \star \Delta_x : x \in W_I \rangle_{(1), \Delta}$ of $\BE (\fr{h}, W)$. 
                By Lemma \ref{lem:ft_commutes}, we also have that $\scrC = \langle \Delta_x \star \FT_{W, I} : x \in W_I \rangle_{(1), \Delta}$. In particular, the functors $\FT_{W,I} \star \iota (-)$ and $\iota (-) \star \FT_{W,I}$ factor through essentially surjective functors
                \[\begin{tikzcd}
{\BE (\fr{h}, W_I)} \arrow[rr, "{\FT_{W,I} \star \iota (-)}", shift left] \arrow[rr, "{\iota (-) \star \FT_{W,I} }"', shift right] &  & \scrC.
\end{tikzcd}\]
Since $\FT_{W,I}$ is an invertible object in $\BE (\fr{h}, W)$, both of these functors are in fact equivalences of categories. 
By Theorem \ref{thm:rel_serre_duality}, the inverse to both functors is given by $\iota^L : \scrC \to \BE (\fr{h}, W_I)$.
Therefore, we can conclude that  $\FT_{W, I} \star \iota (-) \cong \iota (-) \star \FT_{W, I}$.
        \end{midsecproof}


\begin{thebibliography}{S}

        \bibitem[Abe1]{Abe19}
        N. Abe, \emph{A bimodule description of the Hecke category}, Compos. Math. {\bf 157} (2021), no.~10, 2133--2159.

        \bibitem[Abe2]{Abe21}
        N. Abe, \emph{A homomorphism between Bott--Samelson bimodules}, Nagoya Math. J. {\bf 256} (2024), 761--784.

        \bibitem[AMRW]{AMRW}
        P.~N. Achar, S. Makisumi, S. Riche, and G. Williamson, {\it Koszul duality for Kac-Moody groups and characters of tilting modules}, J. Amer. Math. Soc. {\bf 32} (2019), no.~1, 261--310.

        \bibitem[ARV]{ARV}
        P.~N. Achar, S. Riche and C. Vay, \emph{Mixed perverse sheaves on flag varieties for Coxeter groups}, Canad. J. Math. {\bf 72} (2020), no.~1, 1--55.

        \bibitem[EH]{EH}
        B. Elias and M. Hogancamp, \emph{Drinfeld centralizers and Rouquier complexes}. Preprint \href{https://arxiv.org/abs/2412.20633}{arXiv:2412.20633}.

        \bibitem[EW]{EW}
        B. Elias and G. Williamson, \emph{Soergel calculus}. Representation Theory of the American Mathematical Society, 20(12):295–374, 2016.

        \bibitem[GHMN]{GHMN}
        E. Gorsky, M. Hogancamp, A. Mellit, and K. Nakagane, \emph{Serre duality for Khovanov--Rozansky homology}, Selecta Math. (N.S.) {\bf 25} (2019), no.~5, Paper No. 79, 33 pp.

        \bibitem[HL1]{HLgeom}
        Q. P. Ho and P. Li. \emph{Revisiting Mixed Geometry}, J. Eur. Math. Soc. (JEMS) (2025).

        \bibitem[HL2]{HLold}
        Q.~P. Ho and P. Li, \emph{Relative Serre duality for Hecke categories}. Preprint \href{https://arxiv.org/abs/2504.12798v1}{arXiv:2504.12798v1}.

        \bibitem[HL3]{HLnew}
        Q.~P. Ho and P. Li, \emph{Relative Serre duality for Hecke categories} (corrected version). Preprint \href{https://arxiv.org/abs/2504.12798v2}{arXiv:2504.12798v2}, to appear in Selecta Math (N. S.).

        \bibitem[Kho]{Kho}
        M.~G. Khovanov, \emph{Triply-graded link homology and Hochschild homology of Soergel bimodules}, Internat. J. Math. {\bf 18} (2007), no.~8, 869--885.

        \bibitem[KR]{KR}
        M.~G. Khovanov and L. Rozansky, \emph{Matrix factorizations and link homology. II}, Geom. Topol. {\bf 12} (2008), no.~3, 1387--1425.

        \bibitem[Li]{Li}
        C. Li, \emph{Serre duality and the Whitehead link}. Preprint \href{https://arxiv.org/abs/2509.22133}{arXiv:2509.22133}.

        \bibitem[Rou]{Rou}
        R. Rouquier. \emph{Derived equivalences and finite dimensional algebras}. Proceedings of the International Congress of Mathematicians, 2:191–221, 2006.

        \bibitem[San]{Sandvik}
        C. Sandvik. \emph{Soergel calculus for monodromic Hecke categories}. In preparation.

    \end{thebibliography}
\end{document}